\pgfplotsset{compat=1.15}
\theoremstyle{plain}
\newtheorem{theorem}{Theorem}[section]
\newtheorem{lemma}[theorem]{Lemma}
\newtheorem{proposition}[theorem]{Proposition}
\theoremstyle{definition}
\newtheorem{definition}[theorem]{Definition}
\newtheorem{conjecture}[theorem]{Conjecture}
\newtheorem{remark}[theorem]{Remark}
\newtheorem{?}[theorem]{Problem}
\newcommand{\eps}{\varepsilon}
\newcommand{\zz}{\mathbb{Z}}
\newcommand{\ff}{\mathbb{F}}
\begin{document}
	\renewcommand{\refname}{References}
	\pagenumbering{arabic}

	\newpage
	\setcounter{page}{1}
	
	\title{Finding a perfect matching of $\ff_2^{\lowercase{n}}$ with prescribed differences}
	\author{Benedek Kovács\thanks{ELTE Linear Hypergraphs Research Group, Eötvös Loránd University, 1-3 Egyetem tér, 1053, Budapest, Hungary. Email address: \url{benoke98@student.elte.hu}}}
        \date{September 3, 2024}
	
	\maketitle
	
	\begin{abstract}
		We consider the following question by Balister, Győri and Schelp: given $2^{n-1}$ nonzero vectors in $\ff_2^n$ with zero sum, is it always possible to partition the elements of $\ff_2^n$ into pairs such that the difference between the two elements of the $i$-th pair is equal to the $i$-th given vector for every $i$? An analogous question in $\ff_p$, which is a case of the so-called "seating couples" problem, has been resolved by Preissmann and Mischler in 2009. In this paper, we prove the conjecture in $\ff_2^n$ in the case when the number of distinct values among the given difference vectors is at most $n-2\log n-1$, and also in the case when at least a fraction $\frac12+\eps$ of the given vectors are equal (for all $\eps>0$ and $n$ sufficiently large based on $\eps$). \\
		\noindent {\em Keywords}: binary vector spaces, seating couples, prescribed differences, perfect matching, functional batch code, graph colourings \\
		\noindent {\em Mathematics Subject Classification}: 05C70, 05C15, 05E16, 05C25, 51E21, 94B05
	\end{abstract}
	
	\section{Introduction}	
	We consider the following conjecture of Balister, Győri and Schelp \cite{Balister_Gyori_Schelp} from 2008:
	
	\begin{conjecture}[\textit{Main conjecture}]\label{fosejtes} Let $n\ge 2$ be an integer and $m=2^{n-1}$. If the nonzero difference vectors $\boldsymbol{d}_1$, $\boldsymbol{d}_2$, \dots, $\boldsymbol{d}_m$ are given in $\ff_2^n$ such that $\sum\limits_{i=1}^m \boldsymbol{d}_i=\boldsymbol{0}$ (and the $\boldsymbol{d}_i$'s are not necessarily distinct), then $\ff_2^n$ can be subdivided into disjoint pairs $\{\boldsymbol{a}_i, \boldsymbol{b}_i\}$ ($1\le i\le m$) such that $\boldsymbol{a}_i-\boldsymbol{b}_i=\boldsymbol{d}_i$ holds for every $i$.
	\end{conjecture}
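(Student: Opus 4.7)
My approach is induction on $n$, with base case $n=2$ verified by inspection. For the inductive step I would fix a hyperplane $H\le\ff_2^n$, write $\ff_2^n=H\sqcup(H+\boldsymbol{v})$ for some $\boldsymbol{v}\notin H$, and split the multiset $D=\{\boldsymbol{d}_1,\ldots,\boldsymbol{d}_m\}$ into the ``internal'' part $A=\{\boldsymbol{d}_i:\boldsymbol{d}_i\in H\}$ and the ``crossing'' part $B=D\setminus A$ (each element of $B$ has the form $\boldsymbol{v}+\boldsymbol{h}$ with $\boldsymbol{h}\in H$). Any perfect matching realizing $D$ decomposes into pairs inside $H$, pairs inside $H+\boldsymbol{v}$, and pairs crossing the two cosets, the last consuming exactly the differences in $B$. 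The zero-sum hypothesis forces $\sum_{\boldsymbol{d}\in B}\boldsymbol{d}\in H$, which in turn forces $|B|$ to be even; hence $|A|=2^{n-1}-|B|$ is also even.

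For the induction to close I need: (i) a partition $A=A_1\sqcup A_2$ with $|A_1|=|A_2|=|A|/2$ and $\sum A_1=\sum A_2=\boldsymbol{0}$ in $H$; (ii) inductive matchings inside $H$ and inside $H+\boldsymbol{v}$ (each identified with $\ff_2^{n-1}$) realizing the difference multisets $A_1$ and $A_2$, leaving exactly $|B|$ unmatched elements in each coset; and (iii) a bijection between the two leftover sets realizing the multiset $B$. Step (i) seems flexible when some difference value is repeated many times: by choosing $H$ so as to either include or exclude the most common $\boldsymbol{d}_i$, one can tune $|A|$ and use duplicates to balance the two half-sums. Step (ii) is then immediate from the inductive hypothesis.

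I expect step (iii) to be the main obstacle. Once the inner matchings are fixed, the leftover sets $X\subset H$, $Y\subset H+\boldsymbol{v}$ of size $|B|$ have essentially no controlled structure, and finding a bijection $X\to Y$ with prescribed difference multiset $B$ is a bipartite analogue of the very conjecture we are trying to prove, with no obvious recursion available. To break the deadlock I would attempt one of two moves. First, strengthen the inductive statement to carry along explicit control over the ``shape'' of the leftover set in each coset, so that when the time comes to match across cosets the leftovers are, by construction, compatible with $B$. Second, supplement the recursive construction with a local exchange/augmenting argument: starting from any matching of $\ff_2^n$, swap pairs along $4$-cycles of the form $\{\boldsymbol{a},\boldsymbol{a}+\boldsymbol{d}\},\{\boldsymbol{b},\boldsymbol{b}+\boldsymbol{d}'\}\leadsto\{\boldsymbol{a},\boldsymbol{b}\},\{\boldsymbol{a}+\boldsymbol{d},\boldsymbol{b}+\boldsymbol{d}'\}$ to transform the realized multiset into the prescribed one. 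In either route, the true crux is structural: characterizing which multisets of $2^{n-1}$ nonzero, zero-sum vectors arise as difference sequences of perfect matchings of $\ff_2^n$, a question that appears to lie beyond the small-support and dominant-value regimes handled in this paper.
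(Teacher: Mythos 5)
The statement you are asked to prove is the paper's \emph{Main Conjecture}, which the paper does not prove: it remains open, and the paper only establishes it in two special regimes (at most $n-2\log n-1$ distinct difference values, Theorem \ref{thm-n-2logn-1-osztaly}, and the case where some value occurs with frequency at least $\frac12+\eps$, Theorem \ref{mainthm_halfpluseps}). So there is no proof in the paper to compare yours against, and your text is, by your own admission, a programme rather than a proof.

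Beyond the obstacle you yourself flag at step (iii), there are concrete failures earlier. In step (i) you require a partition $A=A_1\sqcup A_2$ with $|A_1|=|A_2|$ and $\sum A_1=\sum A_2=\boldsymbol{0}$; but the hypotheses only force $\sum A=\sum B$, which need not be $\boldsymbol{0}$, and even when $\sum A=\boldsymbol{0}$ a balanced zero-sum bipartition can fail outright (take $A=\{\boldsymbol{d},\boldsymbol{d}\}$ with $\boldsymbol{d}\ne\boldsymbol{0}$). In step (ii) the inductive hypothesis produces a \emph{perfect} matching of $H\cong\ff_2^{n-1}$, whereas you need a partial matching of $H$ realizing $A_1$ that leaves a prescribed, or at least controllable, set of $|B|$ vertices uncovered; that is a strictly stronger statement than the conjecture in dimension $n-1$ and is not ``immediate from the inductive hypothesis'' unless $B=\emptyset$. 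Step (iii) is, as you say, a bipartite version of the full conjecture with no recursion available, and your closing remark --- that the crux is to characterize which zero-sum multisets of nonzero vectors are realizable --- is precisely the conjecture itself, so the proposal does not reduce the problem. For comparison, the paper's partial results avoid hyperplane induction entirely: they fix the most frequent difference $\mathbf{u}$, pass to $\ff_2^n/\langle\mathbf{u}\rangle$, and assemble the matching greedily from structured blocks ($\mathbf{u}$-good multisets built from mod-$\mathbf{u}$ circuits, and $k$-flats with large $\mathbf{u}$-blocking ratio), using the abundant copies of $\mathbf{u}$ to complete each block to a union of $\langle\mathbf{u}\rangle$-cosets; this is why those arguments need a dominant or low-support difference multiset and do not extend to the general case.
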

	
	\textit{Here and throughout the article, $\ff_q$ denotes the $q$-element field where $q$ is a prime power, and $\ff_q^n$ denotes the vector space of dimension $n$ over $\ff_q$.}
	
	In the same year, Bacher has independently posed another analogous version of this question, where instead of $\ff_2^n$, we partition the elements of $\ff_p\setminus\{0\}$ into pairs, where $p$ is an odd prime, and there is no restriction on the sum of the given (nonzero) differences. For this case, Preissmann and Mischler gave a positive answer \cite{Preissmann_Mischler}; their method relies on summing the values of an appropriate multivariate polynomial over $\ff_p$.
	
	\begin{theorem}[Preissmann, Mischler]\label{PM_tetel}
		Let $p$ be an odd prime and $M=\frac{p-1}{2}$. If in $\ff_p$, the nonzero differences $d_1, d_2, \dots, d_M$ are given, then $\ff_p\setminus \{0\}$ can be partitioned into disjoint pairs $\{a_i, b_i\}$ ($1\le i\le M$) such that $a_i-b_i=d_i$ for each $i$.
	\end{theorem}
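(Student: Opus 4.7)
I would attack this by the polynomial method over $\ff_p$, which is the natural tool for existence results of this flavour. First I reformulate: writing the $i$-th pair as $\{a_i, a_i - d_i\}$, a valid matching corresponds to an $M$-tuple $(a_1, \ldots, a_M) \in \ff_p^M$ satisfying the non-vanishing conditions $a_i \ne 0$ and $a_i \ne d_i$ for each $i$ (so both endpoints are in $\ff_p\setminus\{0\}$), together with $a_i \ne a_j$, $\ a_i - d_i \ne a_j - d_j$, and $a_i \ne a_j - d_j$ for all $i \ne j$ (so the $2M = p-1$ endpoints are pairwise distinct, hence exhaust $\ff_p\setminus\{0\}$). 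This is a conjunction of $2M + 4\binom{M}{2}$ linear non-vanishing conditions.

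The natural indicator is
\[ P(x_1, \ldots, x_M) := \prod_{i=1}^M x_i(x_i - d_i) \cdot \prod_{i \ne j} (x_i - x_j + d_j) \cdot \prod_{i < j} (x_i - x_j)(x_i - x_j - d_i + d_j), \]
with $P(a) \ne 0$ exactly when $a$ gives a valid matching. I would then try to show some $a \in \ff_p^M$ has $P(a) \ne 0$ by computing $S := \sum_{a \in \ff_p^M} P(a)$. Using $\sum_{x \in \ff_p} x^k = -1$ when $k$ is a positive multiple of $p-1$ and $0$ otherwise, this sum reduces to a signed combination of those coefficients of the reduction of $P$ modulo the Frobenius relations $x_i^p - x_i$ at monomials $\prod x_i^{k_i}$ whose every $k_i$ is a positive multiple of $p-1$. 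Equivalently, by Alon's Combinatorial Nullstellensatz, it suffices to exhibit a single monomial of maximal degree whose coefficient is nonzero in $\ff_p$; the expected candidate is $x_1^{p-1}\cdots x_M^{p-1}$.

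The main obstacle is precisely this coefficient computation. Since $P$ has degree $4M - 2$ in each variable, which exceeds the degree bound $p-1 = 2M$ needed for a clean application of the Combinatorial Nullstellensatz, one needs either a substantial reduction modulo $(x_i^p - x_i)_i$ or a cleverer polynomial. I suspect the elegance of the Preissmann--Mischler argument lies in a symmetrised choice: for example, parametrising each pair by its midpoint $y_i := a_i - d_i/2$ and working with polynomials in the $y_i^2$, using the $\{a_i, b_i\}$ unordered symmetry to halve the per-variable degree. With such a cut-down polynomial the top coefficient should be recognisable as essentially a Vandermonde determinant or a factorial, hence nonzero in $\ff_p$ because $M < p$. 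Identifying the right polynomial — one whose coefficient at the top monomial is explicitly computable and visibly nonzero — is the step I would expect to take the most effort; the rest of the argument is a routine, if tedious, power-sum bookkeeping.
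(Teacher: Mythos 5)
This theorem is quoted in the paper as a known result of Preissmann and Mischler (reproved by Kohen and Sadofschi via the Combinatorial Nullstellensatz); the paper gives no proof of it, only the remark that the original argument sums a suitable multivariate polynomial over $\ff_p$. Your plan correctly identifies that strategy and correctly translates the matching condition into the non-vanishing of the indicator polynomial $P$. But what you have written is a plan, not a proof: the entire mathematical content of the theorem is the step you defer, namely showing that the relevant coefficient is nonzero in $\ff_p$, and you neither identify the right coefficient nor compute it.

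Two concrete points. First, your stated obstacle is a misreading of the Combinatorial Nullstellensatz: the hypothesis is not that $P$ have per-variable degree at most $p-1$, but that the \emph{target monomial} $\prod_i x_i^{t_i}$ satisfy $t_i<|S_i|$ and $\sum_i t_i=\deg P$. Here $\deg P = 2M + 2M(M-1) = 2M^2 = M(p-1)$, so the monomial $x_1^{p-1}\cdots x_M^{p-1}$ has exactly top degree and $t_i=p-1<p=|\ff_p|$; the theorem applies directly with no reduction modulo $x_i^p-x_i$ and no need for the midpoint symmetrisation you speculate about. Second, the coefficient in question is then the coefficient of $x_1^{p-1}\cdots x_M^{p-1}$ in the top-degree part of $P$, which equals $\pm\prod_i x_i^2\cdot\prod_{i<j}(x_i-x_j)^4$; so one must show that the coefficient of $\prod_i x_i^{p-3}$ in the \emph{fourth power of the Vandermonde determinant} is nonzero modulo $p$. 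This is a Dyson-type constant-term evaluation (it comes out to $\pm(2M)!/2^M$ up to normalisation, nonzero mod $p$ since $2M=p-1$), and establishing it is precisely where the work of Kohen and Sadofschi lies. Without that identity, or the explicit power-sum computation of Preissmann and Mischler that replaces it, the argument is not a proof.
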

	
	Later, Kohen and Sadofschi \cite{Kohen_Sadofschi} gave a new proof of this claim using the Combinatorial Nullstellensatz.
	
	The statement can also be investigated for other cyclic groups as well. In the ring $\zz_n$ of mod $n$ residue classes, let  $\zz_n^{*}$ denote the set of units, i.e. the elements coprime to $n$. The following conjecture of Adamaszek pertaining to cyclic groups of even order has been proven by Kohen and Sadofschi \cite{Kohen_Sadofschi_DM}:
	
	\begin{theorem}[Kohen, Sadofschi]
		Let $n=2M$ be a positive even number. If the elements $d_1, d_2, \dots, d_M\in \zz_n^{*}$ are arbitrarily given, then $\zz_n$ can be partitioned into disjoint pairs $\{a_i, b_i\}$ such that $a_i-b_i=d_i$ for each $i$.
	\end{theorem}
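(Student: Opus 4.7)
The approach I would pursue is the polynomial method, mirroring the Kohen–Sadofschi proof of Theorem \ref{PM_tetel}, combined with a parity-based reduction specific to the even case. The key structural observation is that every unit of $\zz_n$ with $n = 2M$ even is odd; consequently each prescribed difference $d_i$ is odd, and any pair $\{a_i, b_i\}$ satisfying $a_i - b_i = d_i$ must contain one even and one odd element. Writing $b_i$ for the odd member, the sum $a_i = b_i + d_i$ is automatically even, and the task reduces to choosing $b_1, \dots, b_M$ as a permutation of the set $O$ of odd residues mod $n$, in such a way that the shifted values $b_1 + d_1, \dots, b_M + d_M$ are all distinct (they automatically lie in the set $E$ of even residues).

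To produce such an assignment, I would introduce the polynomial
\[
P(x_1, \dots, x_M) = \prod_{1 \le i < j \le M}(x_i - x_j) \cdot \prod_{1 \le i < j \le M}\bigl((x_i + d_i) - (x_j + d_j)\bigr),
\]
of total degree $M(M-1)$. A non-vanishing evaluation at some $(b_1, \dots, b_M) \in O^M$ gives the desired matching: the first product enforces the distinctness of the $b_i$ and the second that of the $a_i$. The Combinatorial Nullstellensatz, applied with $|O| = M$ and target exponents $M-1$ for each variable, directs attention to the coefficient of $\prod x_i^{M-1}$; a direct expansion of the two Vandermonde-type factors pins this coefficient down to $\pm M!$, independent of the $d_i$.

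The main obstacle is twofold. First, $\zz_{2M}$ is not a field, so the Combinatorial Nullstellensatz does not apply directly. Second, and more seriously, $M! \equiv 0 \pmod{p^{e_p}}$ for every prime power $p^{e_p} \| 2M$ as soon as $M$ is even moderately large, so even after Chinese remaindering the coefficient vanishes in each local component. Overcoming this requires more care: one natural attempt is to use the explicit form $O = 1 + 2\zz_{2M}$ and substitute $x_i = 1 + 2u_i$, pushing the problem to one over $\zz_M$, where the analogous polynomial $\tilde P(u_1, \dots, u_M) = \prod_{i<j}(u_i - u_j) \bigl((u_i + c_i) - (u_j + c_j)\bigr)$ with $c_i \equiv (d_i + 1)/2 \pmod M$ is to be evaluated on $\zz_M^M$. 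Unfortunately the same vanishing issue resurfaces here, which suggests an inductive argument over the prime factorization of $2M$, or alternatively a non-polynomial route such as a Hall-type matching argument on the bipartite graph of even–odd edges labeled by the $d_i$. Identifying a base case where the induction can genuinely be started, and reconciling the inductive step with the Chinese remaindered local components, is the most delicate point I foresee.
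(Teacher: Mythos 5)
This theorem is quoted in the paper from \cite{Kohen_Sadofschi_DM} without proof, so there is no in-paper argument to compare against; the question is only whether your proposal stands on its own, and it does not. Your opening reduction is correct and worth keeping: every unit of $\zz_{2M}$ is odd, so each pair must consist of one odd and one even residue, and writing $b_i=1+2\beta_i$, $a_i=2\alpha_i$ turns the problem into finding two enumerations $\alpha_1,\dots,\alpha_M$ and $\beta_1,\dots,\beta_M$ of $\zz_M$ with $\alpha_i-\beta_i=c_i:=(d_i+1)/2$ in $\zz_M$. But from there the main line collapses, as you yourself observe: the Combinatorial Nullstellensatz needs a field (or at least a coefficient that is not a zero divisor), and the relevant coefficient $\pm M!$ vanishes modulo every prime divisor of $2M$ as soon as $M\ge 2$, in both the $\zz_{2M}$ and the $\zz_M$ formulations. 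A proposal whose central tool provably fails and whose suggested repairs (``an inductive argument over the prime factorization'', ``a Hall-type matching argument'') are left entirely undeveloped is not a proof; in particular, if by ``Hall-type'' you mean Philip Hall's marriage theorem on the even--odd bipartite graph, that only produces a system of distinct representatives and cannot by itself enforce the prescribed differences.

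The ingredient you are missing is Marshall Hall's 1952 theorem \cite{Hall} (already in this paper's bibliography, used there for a different purpose): for a finite abelian group $G$ of order $M$, elements $c_1,\dots,c_M\in G$ can be written as $c_i=\alpha_i-\beta_i$ with $(\alpha_i)$ and $(\beta_i)$ both enumerations of $G$ if and only if $\sum_i c_i=0$. In your reduction $\sum_i c_i$ need not vanish in $\zz_M$, but you have an unused degree of freedom: the pairs $\{a_i,b_i\}$ are unordered, so for each $i$ you may realize $-d_i$ instead of $d_i$, which replaces $c_i$ by $1-c_i$ and hence shifts $\sum_j c_j$ by $-d_i$. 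Since the $d_i$ are units of $\zz_{2M}$, they are units of $\zz_M$, and the subset sums of $M$ units of $\zz_M$ cover all of $\zz_M$ (the set of subset sums grows by at least one element with each new unit until it is the whole group). Hence one can choose orientations making $\sum_i c_i=0$ and then apply Hall's theorem to finish. Without some such mechanism for forcing the zero-sum condition and a difference-realization theorem in place of the Nullstellensatz, your write-up records an obstruction rather than overcoming it.
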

	
	For newer results on this case, see \cite{Meszka_Pasotti_Pellegrini}. In cyclic groups of odd order, Pasotti and Pellegrini \cite{Pasotti_Pellegrini} made the following conjecture, which would generalize Theorem \ref{PM_tetel}:
	
	\begin{conjecture}[Pasotti, Pellegrini]
		Let $n=2M+1$ be a positive odd number. If the elements $d_1, d_2, \dots, d_M\in \zz_n\setminus \{0\}$ are arbitrarily given, then $\zz_n\setminus \{0\}$ can be partitioned into disjoint pairs $\{a_i, b_i\}$ such that $a_i-b_i=d_i$ for each $i$ if and only if the following condition holds: 
		
		\begin{center}
			\begin{minipage}{0.6\textwidth}
				\centering
			for any divisor $k$ of $n$, the number of multiples of $k$ among the differences $d_i$ does not exceed $\frac{n-k}{2}$.
			\end{minipage}
		\end{center}
	\end{conjecture}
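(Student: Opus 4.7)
The conjecture is biconditional, so I treat the two directions separately. For the \emph{necessity} direction, fix a divisor $k$ of $n$ and let $H_k = \{0, k, 2k, \ldots, n-k\}$ be the subgroup of $\zz_n$ of odd order $n/k$. If $d_i = a_i - b_i \in H_k$, then $a_i$ and $b_i$ must lie in the same coset of $H_k$. The $k$ cosets each have odd size $n/k$, and the coset $H_k$ itself has only $n/k - 1$ usable elements (since $0$ is excluded from the partition). Each coset therefore contributes at most $(n/k - 1)/2$ within-coset pairs, and summing over cosets yields the required bound $(n-k)/2$.

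For the \emph{sufficiency} direction, I would attempt induction on the number of prime factors of $n$ counted with multiplicity. The base case $n = p$ prime is handled by Theorem \ref{PM_tetel}: the only nontrivial divisor is $n$ itself, so the hypothesis reduces to the assumption that every $d_i$ is nonzero. For the inductive step, pick a prime factor $p$ of $n$, set $n' = n/p$, and let $H = p\zz_n \cong \zz_{n'}$ with quotient map $\pi : \zz_n \to \zz_p$. Split the index set as $S = \{i : d_i \in H\}$ and $T = \{i : d_i \notin H\}$; the hypothesis at $k = p$ gives $|S| \leq (n-p)/2$. The strategy is to assign each $i \in S$ to one of the $p$ cosets of $H$ (where its pair will be formed inside the coset) and to treat the indices in $T$ as cross-coset pairs. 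A divisor $k' \mid n'$ in the per-coset subproblem lifts to $k = pk' \mid n$ upstairs, and since $p \cdot (n' - k')/2 = (n - pk')/2$, the upstairs budget distributes \emph{exactly} among the $p$ per-coset budgets. Meanwhile, the cross-coset pairs, viewed through $\pi$, form a multigraph on $\zz_p$ whose edges carry labels in $\zz_p \setminus \{0\}$, and must be realizable as a matching of the leftover elements of each coset.

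The principal obstacle is the simultaneous feasibility of these two intertwined subproblems: the distribution of $S$ across cosets must leave exactly the right number of unmatched elements in each coset so that the cross-coset pattern prescribed by $T$ is realizable. I would first attempt a Hall- or flow-type splitting lemma showing that any distribution of $S$ compatible with all per-coset divisor inequalities extends to a valid cross-coset arrangement, the latter itself being reducible to another application of Theorem \ref{PM_tetel} since $\zz_p$ is prime. Pinning down this splitting, particularly in the regime where the divisor inequalities are nearly tight simultaneously for several divisors, is the step I expect to be genuinely delicate. If the combinatorial route stalls, I would in parallel pursue a Combinatorial Nullstellensatz approach in the spirit of Kohen and Sadofschi \cite{Kohen_Sadofschi}, working directly over $\zz_n$: one would build a polynomial whose nonvanishing encodes a valid matching and identify a top-degree monomial whose coefficient is nonzero precisely under the divisor conditions. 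The presence of zero divisors in $\zz_n$ would make that coefficient computation substantially more intricate than in the prime case, and isolating the correct monomial strikes me as the likely crux of a polynomial-method proof.
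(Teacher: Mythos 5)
This statement is the Pasotti--Pellegrini conjecture, which the paper quotes as an \emph{open} conjecture and does not prove (it only cites partial progress elsewhere), so there is no proof of the paper's to compare against. Your necessity argument is correct and is the standard one: the multiples of $k$ form the subgroup $H_k$ of odd order $n/k$, within-coset pairs in each of the $k$ cosets number at most $(n/k-1)/2$, and summing gives $(n-k)/2$. That direction is fine.

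The sufficiency direction, however, is not a proof but a research plan, and you say so yourself; as it stands it has genuine gaps beyond the "delicate splitting" you flag. Two concrete obstacles. First, the per-coset subproblems in your induction are not instances of the inductive hypothesis: a non-identity coset of $H=p\zz_n$ has $n'=n/p$ elements, an odd number, \emph{all} of which must be covered, so it cannot be handled by the statement for $\zz_{n'}$ (which matches $n'-1$ elements and omits $0$); each such coset necessarily exports an odd number of elements to cross-coset pairs, and only the identity coset resembles the inductive statement. Second, your claim that the cross-coset part is "reducible to another application of Theorem \ref{PM_tetel} since $\zz_p$ is prime" is too quick: Theorem \ref{PM_tetel} would realize the projected differences $\pi(d_i)\in\zz_p$, but the conjecture requires $a_i-b_i=d_i$ exactly in $\zz_n$, which constrains which elements of the two chosen cosets may be paired, not merely which cosets are joined. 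Combined with the fact that the divisor budgets distribute with zero slack across cosets (as you note), the induction does not close, and the Nullstellensatz alternative is only named, not executed. In short: necessity is proved, sufficiency is not, and the statement remains a conjecture.
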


	We note that they formulated the conjecture using the terminology of matchings, which can be generally used to describe problems of this type: assign a colour to each nonzero element of an abelian group $G$, with $g,g'\in G$ getting the same colour if and only if $g'=\pm g$. In the complete graph with vertex set $G$, each edge $ab$ is assigned the colour of $a-b$. Then we would like to find a perfect matching (if $|G|$ is even) or a matching covering all but one vertices (if $|G|$ is odd) such that the colours of the matching's edges form a prescribed multiset.
	
	Another way to generalize Theorem \ref{PM_tetel} is if we consider the problem for $\ff_p^n\setminus \{\mathbf{0}\}$ instead of $\ff_p\setminus \{0\}$. Karasev and Petrov \cite{Karasev_Petrov} showed that in this case, the same statement does not hold: for example if every difference $\mathbf{d}_i$ is equal to the same nonzero vector $\mathbf{d}$, then in each vector pair, the two elements have to fall into the same $\langle \mathbf{d}\rangle$-coset, however the nonzero cosets have an odd number of elements in $\ff_p^n\setminus \{\mathbf{0}\}$, hence they cannot be partitioned into such pairs. However they have shown the following result \cite[Theorem 3]{Karasev_Petrov}:
	
	\begin{theorem}[Karasev, Petrov]
		Let $p$ be an odd prime and $M=\frac{p^n-1}{2}$. If the sets $\{\boldsymbol{d}_{1,1}, \dots, \boldsymbol{d}_{1,n}\}$,  $\{\boldsymbol{d}_{2,1}, \dots, \boldsymbol{d}_{2,n}\}$, \dots, $\{\boldsymbol{d}_{M,1}, \dots, \boldsymbol{d}_{M,n}\}$ are given in $\ff_p^n$ such that each set is a linear basis of $\ff_p^n$, then there exists a function $g: \{1, ..., M\}\to \{1, ..., n\}$ such that $\ff_p^n\setminus \{\boldsymbol{0}\}$ can be subdivided into disjoint pairs $\{\boldsymbol{a}_i, \boldsymbol{b}_i\}$ ($1\le i\le M$) such that $\boldsymbol{a}_i-\boldsymbol{b}_i=\boldsymbol{d}_{i,g(i)}$ for each $i$.
	\end{theorem}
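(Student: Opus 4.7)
My plan is to adapt the Combinatorial Nullstellensatz proof of Theorem \ref{PM_tetel} due to Kohen--Sadofschi to the multidimensional setting. Identify $\ff_p^n$ additively with $\ff_q$ where $q=p^n=2M+1$; the basis hypothesis then says that each polynomial $f_i(T):=\prod_{j=1}^n (T-\boldsymbol{d}_{i,j})\in \ff_q[T]$ is separable of degree $n$ with nonzero constant term. The problem reduces to finding $\boldsymbol{a}_i,\boldsymbol{b}_i\in \ff_q^*$ for $1\le i\le M$, pairwise distinct, with $f_i(\boldsymbol{a}_i-\boldsymbol{b}_i)=0$; the function $g$ is recovered from which root of $f_i$ the difference happens to equal.

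For each $i$ introduce two variables $\boldsymbol{a}_i\in \ff_q$ and $\boldsymbol{c}_i\in S_i:=\{\boldsymbol{d}_{i,1},\ldots,\boldsymbol{d}_{i,n}\}$; set $\boldsymbol{b}_i:=\boldsymbol{a}_i-\boldsymbol{c}_i$ and consider
\[
P(\boldsymbol{a},\boldsymbol{c}) \;=\; \prod_{1\le i<k\le M}(\boldsymbol{a}_i-\boldsymbol{a}_k)(\boldsymbol{a}_i-\boldsymbol{b}_k)(\boldsymbol{b}_i-\boldsymbol{a}_k)(\boldsymbol{b}_i-\boldsymbol{b}_k)\cdot \prod_{i=1}^M \boldsymbol{a}_i\boldsymbol{b}_i.
\]
On the discrete box $\ff_q^M\times\prod_i S_i$, we have $P\ne 0$ exactly when the configuration $\{\boldsymbol{a}_i,\boldsymbol{b}_i\}_i$ is a valid pairing of $\ff_q^*$. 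By the Combinatorial Nullstellensatz (with variable sets $\ff_q$ for the $\boldsymbol{a}_i$ and the $n$-element sets $S_i$ for the $\boldsymbol{c}_i$), it suffices to exhibit a monomial $\prod_i \boldsymbol{a}_i^{\alpha_i}\boldsymbol{c}_i^{\beta_i}$ with $\alpha_i\le q-1$, $\beta_i\le n-1$, of total degree $\deg P=2M^2$, whose coefficient in $P$ is nonzero in $\ff_q$.

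The main obstacle is choosing the monomial and verifying its coefficient is nonvanishing. The naive ``top'' choice $\alpha_i=q-1$, $\beta_i=0$ returns the Dyson-type constant $(2M)!/2^M$ (Kohen--Sadofschi's coefficient for the 1-dimensional case), which vanishes modulo $p$ for $n\ge 2$ since $(p^n-1)!\equiv 0\pmod p$. Some of the degree must therefore be transferred onto the $\boldsymbol{c}_i$ variables; a natural candidate is $\alpha_i=q-n$, $\beta_i=n-1$ (total degree $M(q-1)=2M^2$, matching $\deg P$, and both bounds satisfied). I expect the resulting coefficient to factor, after applying Newton's identities to $f_i$, as a product over $i$ of a determinantal/Schur-type expression in $\{\boldsymbol{d}_{i,j}\}_j$ whose nonvanishing is captured exactly by the linear-independence hypothesis --- most cleanly as a Vandermonde determinant of the $\boldsymbol{d}_{i,j}$ in some fixed $\ff_p$-basis of $\ff_q$. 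Verifying such an identity in characteristic $p$ is the crux; if the combinatorial calculation proves intractable, one fallback is to invoke equivariant-topology methods of Karasev (Fadell--Husseini index computations on the ordered configuration space of $\ff_q^*$) to extract existence without an explicit coefficient formula.
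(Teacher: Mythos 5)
This statement is quoted from Karasev and Petrov and is not proved in the paper, so there is no internal proof to compare against; your general direction (Combinatorial Nullstellensatz, as in Kohen--Sadofschi) does match how the result is actually proved in the literature. However, your proposal is not a proof: the entire content of a Nullstellensatz argument is the non-vanishing of a specific coefficient, and you only ``expect'' it to factor nicely.

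Worse, the set-up as written is structurally doomed, not merely incomplete. After the substitution $\boldsymbol{b}_i=\boldsymbol{a}_i-\boldsymbol{c}_i$, every factor of your $P$ is a linear form in the variables with coefficients $\pm1$, so $P$ is homogeneous of degree $2M^2$ with \emph{integer coefficients that do not depend on the data} $\boldsymbol{d}_{i,j}$; the data enter only through the grids $S_i$. Since your chosen monomial has total degree equal to $\deg P$, its coefficient is a universal constant. If that constant were nonzero mod $p$, the Nullstellensatz would yield a valid pairing for \emph{any} choice of $n$ distinct nonzero vectors per index, with no basis hypothesis. But that statement is false: take every $S_i$ to be $n$ distinct nonzero vectors inside a fixed hyperplane $W$; then every pair must lie in a single $W$-coset, and the $p-1$ nonzero cosets each have $p^{n-1}$ (odd) elements, so no perfect pairing of $\ff_p^n\setminus\{\boldsymbol{0}\}$ exists. (This is the same obstruction the paper records just before the theorem statement.) Hence the coefficient of \emph{every} top-degree monomial of your $P$ is forced to vanish mod $p$, and no choice of $(\alpha_i,\beta_i)$ can rescue this polynomial. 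The basis hypothesis must enter the coefficients of the polynomial itself, not just the evaluation grid --- e.g.\ by encoding ``$\boldsymbol{a}_i-\boldsymbol{b}_i\in S_i$'' through the product of the dual-basis linear functionals of $\{\boldsymbol{d}_{i,j}\}_j$, so that the relevant coefficient carries a determinant that is nonzero exactly when each set is a basis; this is essentially what Karasev and Petrov do. Your ``fallback'' to equivariant topology is a pointer to a different method, not an argument.
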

	
	If we investigate the statement in $\ff_2^n$ instead of $\ff_p^n$, and would like a perfect matching, we also need to include the zero vector in the set of elements to be matched. Even in this case, the claim does not hold for arbitrary nonzero difference vectors, as the sum of the differences has to be equal to the sum of all elements of the vector space, which is zero (see Lemma \ref{vektorterben_osszeg0}). By the main conjecture, this would be a sufficient condition in order for an adequate perfect matching to exist.
	
	The authors of \cite{Balister_Gyori_Schelp} have also verified the conjecture for the case $n\le 5$, and they have proved the main conjecture in the special case where among the vectors $\mathbf{d}_i$, there is a value appearing at least $\frac12m$ times, given also that all values appear an even number of times \cite[Theorem 4]{Balister_Gyori_Schelp}:
	
	\begin{theorem}[Balister, Győri, Schelp] \label{feleazonos_tobbiparokban}
		The main conjecture is true in the case when the vectors $\boldsymbol{d}_1, \boldsymbol{d}_2, \dots, \boldsymbol{d}_{\frac{m}{2}}$ are all equal, and for every integer $\frac{m}{4}< i\le \frac{m}{2}$ we have $\boldsymbol{d}_{2i-1}=\boldsymbol{d}_{2i}$.
	\end{theorem}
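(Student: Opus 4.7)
The natural thing to do is to work modulo $H := \{\mathbf{0}, \mathbf{d}\}$, where $\mathbf{d} := \mathbf{d}_1 = \dots = \mathbf{d}_{m/2}$, and pass to the quotient $\bar V := \ff_2^n / H$ with projection $\pi: \ff_2^n \to \bar V$; then $|\bar V| = 2^{n-1} = m$ and every $H$-coset has exactly two elements. Writing the remaining differences as equal pairs $(\mathbf{e}_j,\mathbf{e}_j)$ for $j = 1,\dots, m/4$, the key observation is that each such equal pair can be used to cleanly ``consume'' two whole $H$-cosets at once: if $\mathbf{e}_j \ne \mathbf{d}$, then for any coset $C$ the two copies of $\mathbf{e}_j$ can be used to pair the four elements of $C$ and $C + \pi(\mathbf{e}_j)$ (via $c \leftrightarrow c + \mathbf{e}_j$ for $c \in C$, which has difference $\mathbf{e}_j$ regardless of the representatives chosen); if $\mathbf{e}_j = \mathbf{d}$, the pair instead pairs two arbitrary cosets each internally. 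Likewise, each of the $m/2$ free copies of $\mathbf{d}$ pairs one coset internally.

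Setting $k := |\{ j : \mathbf{e}_j = \mathbf{d}\}|$, the original matching problem thus reduces to the following problem on $\bar V$: find $m/4 - k$ pairwise vertex-disjoint labeled edges $\{\bar u_j, \bar u_j + \pi(\mathbf{e}_j)\}$, one for each $j$ with $\mathbf{e}_j \ne \mathbf{d}$, occupying $2(m/4-k)$ distinct vertices; the remaining $m/2 + 2k$ cosets are then absorbed by the $m/2$ free copies of $\mathbf{d}$ together with the $2k$ copies coming from equal pairs with $\mathbf{e}_j = \mathbf{d}$. I would solve this reduced problem greedily: after $j - 1$ edges have been placed, they occupy $2(j-1)$ vertices, so at most $4(j-1)$ starting points $\bar u_j$ are forbidden (either $\bar u_j$ itself or $\bar u_j + \pi(\mathbf{e}_j)$ landing on an occupied vertex). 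Since $j - 1 \le m/4 - 1$, one has $4(j-1) \le m - 4 < |\bar V| = m$, so a valid $\bar u_j$ always exists.

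The entire substance of the argument lies in the coset-level reduction; once the correspondence ``equal pair of differences $\leftrightarrow$ two cosets consumed together'' is isolated, the greedy edge placement is essentially free of obstacles. This also pinpoints why the trick is specific to Theorem~\ref{feleazonos_tobbiparokben} and does not extend in any naive way to the full main conjecture: if the remaining differences are not organized into equal pairs, a single stray difference only touches ``half'' of each of two cosets, and one can no longer compress the problem down to a covering problem on $\bar V$.
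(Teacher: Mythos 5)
Your proof is correct and follows essentially the same strategy the paper attributes to Balister, Győri and Schelp: use each equal pair of differences $(\mathbf{e},\mathbf{e})$ with $\mathbf{e}\ne\mathbf{d}_1$ to consume two full $\langle\mathbf{d}_1\rangle$-cosets at once (equivalently, placing $(\mathbf{a},\mathbf{b})$ and then $(\mathbf{a}+\mathbf{d}_1,\mathbf{b}+\mathbf{d}_1)$), greedily, and fill the untouched cosets with copies of $\mathbf{d}_1$. Your quotient-space formulation and the counting $4(j-1)\le m-4<|\bar V|$ correctly complete the greedy step.
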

	
	The main idea of their proof is to first greedily select the values $\mathbf{a}_i$ and $\mathbf{b}_i$ belonging to the differences whose value is not equal to $\mathbf{d}_1$, such that if in one step we selected the pair $(\mathbf{a}_i,\mathbf{b}_i)$, then in the next step we will select the pair $(\mathbf{a}_i+\mathbf{d}_1, \mathbf{b}_i+\mathbf{d}_1)$ which has the same difference. Then in the end, the remaining elements will form pairs of difference $\mathbf{d}_1$.
	\medskip
	
	In coding theoretical terms, Conjecture \ref{fosejtes} is referred to as the \textit{Hadamard solution} version of the \textit{functional batch code conjecture}. Under this framework, Hollmann, Khathuria, Riet and Skachek \cite{Hollmann_Khathuria_Riet_Skachek}, using a theorem of Hall \cite{Hall}, proved that Conjecture \ref{fosejtes} is true whenever all differences $\mathbf{d}_i$ lie outside a given hyperplane $H$ of $\ff_2^n$. For further related work, see \cite{Yohananov_Yaakobi__Hadamard} and \cite{Yohananov_Yaakobi__extendedSimplex} by Yohananov and Yaakobi.
	\medskip
	
	In 2023, Correia, Pokrovskiy and Sudakov \cite{Correia_Pokrovskiy_Sudakov} published the following result:
	
	\begin{theorem}[Correia, Pokrovskiy, Sudakov]\label{Correia_thm}
		Let $G$ be a multigraph whose edges are $t$-coloured, so that each colour class is a matching of size at least $t+20t^{15/16}$. Then there exists a rainbow matching of size $t$ (that is, a matching with $t$ edges of all distinct colours).
	\end{theorem}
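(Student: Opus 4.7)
The plan is to combine a randomised greedy construction with the absorption method. I would set aside a small ``reservoir'' consisting of approximately $s := t^{15/16}$ of the $t$ colour classes to build a robust absorbing structure, and handle the remaining $t - s$ colours with a semi-random greedy procedure. The rough parameter choice is dictated by the slack $20t^{15/16}$: it must simultaneously pay for random-greedy losses and for collisions between the absorber and the greedy matching.

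First I would carry out the random-greedy phase. Order the non-reservoir colours uniformly at random as $c_1, \dots, c_{t-s}$, and iteratively pick a uniformly random unblocked edge of colour $c_k$, also requiring it to avoid the (already fixed) absorbing structure. As long as the partial rainbow matching has size less than about $t - 10 t^{15/16}$, at most $2(t - 10 t^{15/16}) + O(s)$ vertices are blocked, which is less than $|M_{c_k}|$, so an edge is always available. A Doob-martingale/Azuma-type concentration argument applied to the number of successful steps should show that the number of colours on which the greedy procedure fails is $o(t^{15/16})$ with high probability; one has to verify bounded-differences with respect to the random order and to the random choices within each colour.

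Next I would use the absorber, constructed in advance from the $s$ reserved colours, to integrate the missing non-reservoir colours and extend the near-rainbow matching to size $t$. The target property of the absorber is that for any subset $S$ of non-reservoir colours with $|S| \le C t^{15/16}$, one can find a rainbow matching using precisely the reservoir colours and $S$, inside the pre-designated vertex set. A standard way to obtain such an absorber is via small ``switcher'' gadgets: for each reserved colour $c$ and each potential missing colour $c'$, I would look for disjoint edge-pairs $(e, e')$ with $e \in M_c$, $e' \in M_{c'}$ sharing a vertex with some third edge $f \in M_c$, so that $e$ can be replaced by $e'$ while $f$ covers the colour $c$. I would aggregate these gadgets and extract a disjoint collection using a probabilistic nibble or the Lovász Local Lemma.

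The main obstacle is the construction of the absorber with the correct robustness: the pool of switchers has to be rich enough to accommodate every possible small deficiency left by the greedy phase, yet the whole absorber must be small enough that the greedy phase still finds room around it. This is precisely where the exponent $15/16$ is likely forced, reflecting the best balance between the reservoir size $t^{15/16}$ and the concentration error terms for the greedy procedure. I would expect that with careful accounting the two phases fit together and yield the rainbow matching of size $t$.
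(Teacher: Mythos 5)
This statement is not proved in the paper at all: it is an external result quoted from Correia, Pokrovskiy and Sudakov (reference \cite{Correia_Pokrovskiy_Sudakov}), so there is no internal proof to compare your attempt against. Judged on its own, your proposal is a proof \emph{plan}, not a proof, and both of its load-bearing components are left unestablished. The decisive gap is the absorber. The property you require of it --- that for \emph{every} subset $S$ of at most $Ct^{15/16}$ non-reservoir colours there is a rainbow matching on the reserved colours together with $S$ inside a pre-designated vertex set --- is essentially the theorem itself applied to the reservoir, so you cannot simply posit it. Your proposed construction via switcher gadgets does not get off the ground in a general multigraph: the colour classes are arbitrary matchings and need not intersect at all (e.g.\ if they live on pairwise disjoint vertex sets, no edge of colour $c'$ shares a vertex with any edge of colour $c$, and the gadgets $(e,e',f)$ you describe simply do not exist); and when intersections do exist, you give no counting argument showing that each potential missing colour admits $\Omega(1)$ vertex-disjoint gadgets surviving the nibble/Local Lemma extraction. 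Without either a construction or a dichotomy handling the non-intersecting case, the second phase is unsupported.

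The first phase is also incomplete as stated. The availability count is fine, but the concentration step is asserted rather than proved: a single edge choice for colour $c_k$ can block up to two vertices in \emph{every} other colour class, so the ``bounded differences'' you invoke are not obviously $O(1)$ per step in the quantity you need (the final deficiency), and you would have to set up the martingale with respect to an appropriate filtration and verify the Lipschitz condition carefully. Finally, the claim that the exponent $15/16$ ``is likely forced'' by your parameter balance is not derived; in the actual Correia--Pokrovskiy--Sudakov argument that exponent emerges from a specific trade-off in their sampling/augmentation scheme, not from an absorber of the shape you describe. As it stands, the proposal identifies a reasonable genre of argument but does not constitute a proof of the theorem.
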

	
	Applying this result to the graph on the vertex set $\ff_2^n$ with colour class $i$ consisting of the edges between pairs of difference $\mathbf{d}_i$, we get that for any $M\le 2^{n-1}-O(2^{15n/16})$ nonzero differences $\mathbf{d}_i$, we can find disjoint pairs $\{\mathbf{a}_i, \mathbf{b}_i\}$ such that $\mathbf{a}_i-\mathbf{b}_i=\mathbf{d}_i$. However this method does not result in perfect matchings.
	
	Gao, Ramadurai, Wanless and Wormald \cite{Gao_Ramadurai} conjectured that Theorem \ref{Correia_thm} holds for $t+2$ in place of $t+20t^{15/16}$, which would resolve this problem for any $M\le 2^{n-1}-2$ nonzero difference vectors.
	\medskip
	
	The main results of this paper are the following:
	
	\begin{theorem}\label{thm-n-2logn-1-osztaly}
		The main conjecture is true in the case when the number of distinct values among the difference vectors is at most $n-2\log n-1$.
	\end{theorem}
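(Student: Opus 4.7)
Plan. Let $\mathbf{v}_1,\ldots,\mathbf{v}_k$ denote the distinct values occurring among the prescribed differences with multiplicities $m_1,\ldots,m_k$, and let $W=\langle \mathbf{v}_1,\ldots,\mathbf{v}_k\rangle$, so that $d=\dim W\le k$. Since every $\mathbf{d}_i$ lies in $W$, the two endpoints of each matched pair must lie in a common coset of $W$. The problem therefore reduces to assigning to each of the $N=2^{n-d}$ cosets $C$ of $W$ a sub-multiset $G(C)$ of size $2^{d-1}$ whose elements sum to $\mathbf{0}$ and realize a perfect matching of $C$, viewed as a copy of $\ff_2^d$. The hypothesis $k\le n-2\log n-1$ converts to $N\ge 2^{n-k}\ge 2n^2$, a generous budget of cosets.

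First I would try to make most cosets \emph{monochromatic}: $G(C)$ consisting of $2^{d-1}$ copies of a single $\mathbf{v}_i$, which is realized trivially by pairing $C$ along $\langle\mathbf{v}_i\rangle$-cosets. Writing $m_i=2^{d-1}q_i+r_i$ with $0\le r_i<2^{d-1}$, I would allocate $q_i$ cosets monochromatically to $\mathbf{v}_i$. This consumes $\sum_i q_i$ cosets and leaves a residual multiset $R$ of total size $\sum_i r_i<k\cdot 2^{d-1}$. The sum condition $\sum_i m_i\mathbf{v}_i=\mathbf{0}$ forces $\sum_i r_i \mathbf{v}_i = \mathbf{0}$ as well, so $R$ is itself a valid difference multiset to be placed into the remaining cosets.

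Handling $R$ is the heart of the proof. I would proceed by a cascade on the 2-adic valuation of the multiplicities: at each level, pair equal residual copies whenever possible, and absorb the remaining parity defects into a bounded number of \emph{auxiliary} cosets. Each auxiliary $G(C)$ would be constructed to fit the majority-plus-pairs structure of Theorem~\ref{feleazonos_tobbiparokban}, applied inside $\ff_2^d$: $2^{d-2}$ copies of a dominant value, with the other $2^{d-2}$ copies arranged in equal pairs. Pigeonhole gives some $\mathbf{v}_i$ with multiplicity $\ge m/k \gg 2^{d-1}$, supplying enough spare copies of a dominant value to serve as the majority across all auxiliary cosets.

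The main obstacle is the bookkeeping: we must ensure that the total number of auxiliary cosets stays within budget and that each auxiliary sub-instance exactly fits the hypothesis of Theorem~\ref{feleazonos_tobbiparokban}. A careful cascade needs $O(kd)=O(n^2)$ auxiliary cosets in the worst case, which matches the $2\log n$ slack in the hypothesis. Once every $G(C)$ is placed, each monochromatic coset is paired trivially and each auxiliary coset is resolved by Theorem~\ref{feleazonos_tobbiparokban}; the union of these coset-local matchings is the desired perfect matching of $\ff_2^n$.
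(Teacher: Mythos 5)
Your skeleton is the same as the paper's: reduce to cosets of $W=\langle \mathbf{v}_1,\dots,\mathbf{v}_k\rangle$ (the paper's ``layers''), fill most of them monochromatically, and handle a small residual in a bounded number of auxiliary cosets, with the $2\log n$ slack paying for the auxiliaries. But your treatment of the residual has a genuine gap, and it is exactly the hard part of the theorem. In every coset you propose to use, each difference value occurs an \emph{even} number of times: a monochromatic coset uses $2^{d-1}$ copies of one value, and an auxiliary coset shaped to fit Theorem~\ref{feleazonos_tobbiparokban} uses $2^{d-2}$ copies of its dominant value plus equal-valued pairs, so again all counts are even (for $d\ge 3$). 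Hence your construction can only realize instances in which every multiplicity $m_i$ is even. Nothing in the hypotheses forces this: for example $\mathbf{u}$ with multiplicity $2^{n-1}-3$ together with single copies of $\mathbf{a}$, $\mathbf{b}$ and $\mathbf{a}+\mathbf{b}+\mathbf{u}$ (with $\mathbf{u},\mathbf{a},\mathbf{b}$ independent) is an admissible zero-sum instance with four classes, three of odd multiplicity. A single unpaired copy of a non-dominant value can be absorbed neither into the ``equal pairs'' half of an auxiliary coset nor as its majority, so your cascade on the $2$-adic valuation cannot repair a defect at the $2^0$ level. The paper's Phase~1 exists precisely for this: it collects one copy of each odd-multiplicity value into a multiset $H$ with $\sum H=|H|\mathbf{u}$, partitions $H$ into $\mathbf{u}$-good multisets (Proposition~\ref{ugood_partition}), and realizes each such multiset in a fresh layer topped up with copies of $\mathbf{u}$ (Proposition~\ref{ugood_basicproperty}); this rests on the circuit-merging and simple-ordering machinery of Proposition~\ref{merging_circuits}, which is the technical core of Section~2 and the Appendix. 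Your proposal contains no substitute for it.

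A secondary problem is the budget. The binding resource is not the number of free cosets (indeed $\ge 2n^2$) but the supply of the dominant value needed to top up the auxiliary cosets: with your estimate of $O(kd)=O(n^2)$ auxiliaries, each consuming up to $2^{d-2}$ copies of the majority value, you could need about $n^2\cdot 2^{d-2}$, which for $d$ close to $n-2\log n$ is of order $2^{n-3}$, whereas pigeonhole only guarantees $2^{n-1}/k\approx 2^{n-1}/n$ copies of the most frequent value. The paper gets away with $O(t)$ auxiliary layers in total, for which the count $\tfrac43(t-1)2^{k-1}<2^{n-1}/t\le n_1$ does go through; since $|R|<k\cdot 2^{d-1}$, an $O(k)$ bound on the number of auxiliaries is what you should be aiming for.
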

	
	\begin{theorem}\label{mainthm_halfpluseps}
		For every $\eps>0$, there exists a value $n_0$ such that for all $n\ge n_0$, the main conjecture is true in the case when at least a fraction $\frac12+\eps$ of the differences are all equal.
	\end{theorem}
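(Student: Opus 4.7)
Let $\mathbf{d}$ denote the difference value appearing at least $(\tfrac{1}{2}+\eps)m$ times, with multiplicity $M$, and let $\mathbf{d}'_1,\dots,\mathbf{d}'_k$ (with $k\le(\tfrac{1}{2}-\eps)m$) be the remaining non-$\mathbf{d}$ differences. I plan to extend the construction behind Theorem~\ref{feleazonos_tobbiparokben} by working in the quotient $V:=\ff_2^n/\langle\mathbf{d}\rangle\cong\ff_2^{n-1}$. Each $\langle\mathbf{d}\rangle$-coset contains exactly two elements, and any pair of the sought matching either uses both elements of one coset (a $\mathbf{d}$-pair) or one element from each of two distinct cosets (a non-$\mathbf{d}$-pair). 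Hence the matching corresponds bijectively to a $2$-regular multigraph on $V$ whose self-loops are the $\mathbf{d}$-pairs and whose ordinary edges, of projected direction $\pi(\mathbf{d}'_j)$, are the non-$\mathbf{d}$-pairs.

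A direct computation, tracking which of the two coset representatives is used at each vertex of a cycle, shows that the non-$\mathbf{d}$ part of the multigraph decomposes into cycles of length $\ge 2$ and that a cycle of length $l$ with labels $\mathbf{d}'_{j_1},\dots,\mathbf{d}'_{j_l}$ lifts to $\ff_2^n$ if and only if $\sum_i\mathbf{d}'_{j_i}=l\mathbf{d}$. Setting $\tilde{\mathbf{d}}_j:=\mathbf{d}'_j+\mathbf{d}$ recasts this as the cleaner condition that each cycle's modified labels sum to $\mathbf{0}$; the grand total $\sum_j\tilde{\mathbf{d}}_j=(M+k)\mathbf{d}=m\mathbf{d}=\mathbf{0}$ is then automatic. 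So the problem reduces to (i) partitioning $\{\tilde{\mathbf{d}}_j\}$ into short zero-sum groups and (ii) realising each group as a vertex-disjoint cycle in $V$, after which the remaining $M$ cosets pair up automatically as $\mathbf{d}$-pairs.

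For (i), first pair up equal values of $\mathbf{d}'_j$ into length-$2$ groups, which satisfy the closing condition automatically. What remains is a zero-sum sequence of nonzero modified vectors in $\ff_2^n$, from which one iteratively extracts short zero-sum subsequences (for example via the Davenport-constant bound $D(\ff_2^n)=n+1$, or via a Hall-type matching argument). Pathological configurations, where the leftovers lie in a small affine subspace and admit no useful short subsequence on their own, are handled by borrowing one copy from an already-formed length-$2$ group and folding it into a length-$3$ or length-$4$ cycle. For (ii), the chosen groups are embedded greedily in $V$: each group is ordered so that its partial sums are distinct, and the cycle is then translated to avoid the vertices used so far. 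Since the cycles have length $O(n)$ and the available slack $|V|-k\ge 2\eps\cdot 2^{n-1}$ is exponentially large, the greedy procedure finishes.

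The main obstacle will be step (i): ensuring that a short zero-sum decomposition always exists and that a single borrowing suffices in the exceptional cases. The hypothesis $n\ge n_0(\eps)$ enters precisely here: $\ff_2^n$ must be large enough to support the additive-combinatorial arguments (and the flexibility needed for borrowing), while the $2\eps\cdot 2^{n-1}$ slack in $V$ keeps the greedy embedding from running out of room. A potential alternative avenue is to invoke Theorem~\ref{Correia_thm} for the non-$\mathbf{d}$ edges directly in $V$ and then repair the resulting rainbow matching into a $2$-regular subgraph, but this seems to face the same lifting obstructions as the present approach.
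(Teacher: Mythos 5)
Your reduction to the quotient $V=\ff_2^n/\langle\mathbf{d}\rangle$ and the lifting criterion (a cycle with labels $\mathbf{d}'_{j_1},\dots,\mathbf{d}'_{j_l}$ lifts iff $\sum_i\mathbf{d}'_{j_i}=l\mathbf{d}$) are correct and coincide with the paper's framework: your ``short zero-sum groups of modified labels'' are exactly the paper's $\mathbf{u}$-good multisets, and the realisation of one group as a cycle with distinct partial sums is Proposition~\ref{ugood_basicproperty}. The fatal gap is in step (ii), the greedy embedding. To place a translate of a rigid $t$-point pattern $X\subseteq V$ avoiding an already-used set $S$, the provable guarantee (Lemma~\ref{diszjunktmintak}) only works while $|S|<|V|/\bigl(\binom{t}{2}+1\bigr)$; for $t=\Theta(n)$ this is a $\Theta(1/n^2)$ fraction of $V$, not a constant fraction, and ``exponentially large slack'' does not help: a union bound over the $t$ points of the pattern already fails once $|S|\ge|V|/t$, and genuinely blocking sets of density $\tfrac12$ exist for patterns as small as three affinely independent points (take an affine hyperplane separating the pattern). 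In the worst case all $(\tfrac12-\eps)m$ non-$\mathbf{d}$ differences are distinct, so your pairing step produces no length-$2$ groups at all and essentially every difference lands in a rigid group of size up to $n+1$; the greedy then stalls after covering only an $O(1/n^2)$ fraction of $V$. This is precisely the obstruction the paper's proof is built around: it uses the Bonin--Qin result (Proposition~\ref{kflat_findable}) to extract $k$-flats with $k=\lfloor\tfrac13\log_2 n\rfloor$ from the distinct differences, proves that such flats are flexible enough (many decompositions into $2$-flats, each with several valid orderings) to have $\mathbf{u}$-blocking ratio at least $\tfrac12-2^{-(k+1)}-o(1)>\tfrac12-\eps$ (Proposition~\ref{blockingratio_kflat}), and reserves the rigid $\mathbf{u}$-good groups for the subexponentially small remainder, which is embedded \emph{first}, while the used density is still below $1/(2n^2)$.

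A secondary, repairable gap: extracting minimal zero-sum subsequences of the $\tilde{\mathbf{d}}_j$ in $\ff_2^n$ guarantees distinct partial sums in $\ff_2^n$ but not in $V$ (two partial sums may differ by exactly $\mathbf{d}$, collapsing two vertices of your cycle), so the groups must be chosen as minimal zero-sum subsequences \emph{modulo} $\mathbf{d}$ and then corrected for the parity condition $\sum_i\mathbf{d}'_{j_i}=l\mathbf{d}$; your ``borrowing'' sketch is aimed at this, but merging two wrong-parity circuits while preserving distinctness of partial sums in $V$ is nontrivial and is the content of the paper's Propositions~\ref{ugood_partition} and~\ref{merging_circuits}.
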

	
	In the context of both main results, the value that appears the most commonly among the prescribed differences will be denoted by $\mathbf{u}$. (For Theorem \ref{thm-n-2logn-1-osztaly}, if multiple values tie for the highest number of occurrences, then pick any one of them to be $\mathbf{u}$.)
	
	In Section 2 we prove Theorem \ref{thm-n-2logn-1-osztaly} by looking for so-called \textit{circuits} among the differences, meaning inclusionwise minimal, modulo $\mathbf{u}$ linearly dependent sets. From these circuits we create multisets which we call $\mathbf{u}$-good. Using these we can build partial matchings so that among the remaining differences, each value will occur an even number of times.  Then the problem can be reduced to a much simpler task where there are only two distinct difference values.
	
	In Section 3 we prove Theorem \ref{mainthm_halfpluseps}. In order to do this, we use a result of Bonin and Qin \cite{Bonin_Qin} to partition almost all of the given differences not equal to $\mathbf{u}$ into affine subspaces, which serve as another important building block in our construction of the perfect matching. Then we partition the rest into $\mathbf{u}$-good multisets. We construct our matching by first using the $\mathbf{u}$-good multisets greedily, then using the affine subspaces which are more versatile, then completing the matching using any pairs of identical differences, and finally with the copies of $\mathbf{u}$.
	
	\subsection{Notation and preliminaries}
	
	Throughout the paper, $[n]$ denotes the set $\{1, 2, \dots, n\}$. We will also use the notation $A\triangle B$ for the symmetric difference of two sets $A$ and $B$. In an abelian group $G$, if $g\in G$ and $X\subseteq G$ then $g+X=\{g+x: x\in X\}$, and such a set is referred to as a \textit{translate} of $X$.
	
	When working with sequences of elements in an abelian group $G$, the concatenation of sequences $A$ and $B$ will be denoted by $A.B$, so if $A=(a_1,a_2,...,a_k)$ and $B=(b_1,b_2,...,b_{\ell})$ then $A.B=(a_1,...,a_k,b_1,...,b_{\ell})$.
	
	We will work with multisets as well, and this concatenation operator will also be defined for multisets (i.e., for every $c$, if element $c$ appears $m_1$ times in the multiset $M_1$ and $m_2$ times in the multiset $M_2$, then $c$ appears $m_1+m_2$ times in the multiset $M_1.M_2$). The \textit{size} of a multiset $M$ refers to the total number of elements in $M$, counted with multiplicities.
	
	The notation $A\sim B$ will mean that the sequences $A$ and $B$ can be obtained from each other by permuting their elements. If $A=(a_1, a_2, ..., a_k)$ and $S=\{i_1, i_2, ..., i_m\}\subseteq [k]$ with $i_1<i_2<...<i_m$, then $A_S=(a_{i_1}, a_{i_2}, ..., a_{i_m})$. If $A$ is a sequence or a multiset, we let $\sum A$ denote the sum of the elements of $A$, and say that $A$ is \textit{zero-sum} if $\sum A=0$.
	
	When a nonzero element $\mathbf{u}\in \ff_2^n$ is fixed, we will often work in the quotient space $\ff_2^n/\langle \mathbf{u}\rangle$, which is a vector space over $\ff_2$ of dimension $n-1$, and its elements are the $\langle \mathbf{u}\rangle$-cosets of $\ff_2^n$. (Each $\langle \mathbf{u}\rangle$-coset is an unordered pair of elements having a difference of $\mathbf{u}$, and $\ff_2^n$ can be partitioned into $2^{n-1}$ such pairs.) Two elements of $\ff_2^n$ are said to be \textit{congruent mod $\boldsymbol{u}$} if they lie in the same $\langle \mathbf{u}\rangle$-coset, i.e. they are equal or have a difference of $\mathbf{u}$.
	
	We will often refer to the following basic fact about $\ff_2^n$:
	
	\begin{lemma} \label{vektorterben_osszeg0}
		The sum of the elements of the vector space $\ff_2^n$ is $\boldsymbol{0}$ if $n\ge 2$.
	\end{lemma}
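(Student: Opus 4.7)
The plan is to give a short coordinate-wise argument. I would fix an index $i\in[n]$ and count how many vectors $\mathbf{x}\in\ff_2^n$ have $x_i=1$: this count is exactly $2^{n-1}$, since the remaining $n-1$ coordinates may be chosen freely. Because $n\ge 2$, we have $2^{n-1}$ even, so the $i$-th coordinate of the sum $\sum_{\mathbf{x}\in\ff_2^n}\mathbf{x}$ vanishes modulo $2$. This holding for every coordinate gives the conclusion.

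If one prefers a coordinate-free route, an even cleaner version is the pairing argument: pick any nonzero $\mathbf{v}\in\ff_2^n$ (such a $\mathbf{v}$ exists because $n\ge 1$), and note that translation by $\mathbf{v}$ is a fixed-point-free involution on $\ff_2^n$, so the elements split into $2^{n-1}$ pairs $\{\mathbf{x},\mathbf{x}+\mathbf{v}\}$. Each pair contributes $\mathbf{v}$ to the total, hence $\sum_{\mathbf{x}\in\ff_2^n}\mathbf{x}=2^{n-1}\mathbf{v}$, and this is $\mathbf{0}$ in characteristic $2$ as soon as $n\ge 2$.

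There is no real obstacle here; the only thing to watch is the hypothesis $n\ge 2$, which is needed so that $2^{n-1}$ is even (for $n=1$ the sum is $(0)+(1)=(1)\ne\mathbf{0}$, showing the hypothesis cannot be dropped). I would present the coordinate-wise proof since it is the most elementary and makes the role of the assumption $n\ge 2$ transparent.
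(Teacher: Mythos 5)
Your coordinate-wise argument is correct and is exactly the proof given in the paper: for each index $i$, precisely $2^{n-1}$ vectors have $i$-th coordinate $1$, and $2^{n-1}\equiv 0 \pmod 2$ since $n\ge 2$. The alternative pairing argument you sketch is also valid, but the main route you chose matches the paper's.
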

	
	\begin{proof}
		For a given $1\le i\le n$, there are $2^{n-1}$ vectors in $\ff_2^n$ whose $i$-th coordinate is 0 and there are $2^{n-1}$ whose $i$-th coordinate is 1. So modulo 2, in the sum of all vectors the $i$-th coordinate is $2^{n-1}=0$ since $n\ge 2$.
	\end{proof}
	
	\section{Perfect matching in the case of few difference classes}
	
	In this section, we will resolve the main conjecture in a special case. Let the nonzero differences $\mathbf{d}_1$, $\mathbf{d}_2$, \dots, $\mathbf{d}_m$ be given in $\ff_2^n$ such that $\sum_{i=1}^m \mathbf{d}_i=\mathbf{0}$, where $m=2^{n-1}$. The collections containing all differences equal to a fixed vector $\mathbf{d}$ will be called \textit{difference classes}. For a given configuration $(\mathbf{d}_1, \mathbf{d}_2, \dots, \mathbf{d}_m)$ of differences, let $t$ denote the number of nonempty difference classes. For a given value of $n$, we would like to determine the largest integer $T(n)$ for which we can guarantee the existence of a suitable perfect matching of $\ff_2^n$ when $t\le T(n)$.
	
	In the case $t=1$ the task is trivial, since in this case we have to divide our vector space into $2^{n-1}$ pairs of difference $\mathbf{u}$ for some $\mathbf{u}\ne \mathbf{0}$, so we can just take the $\langle \mathbf{u}\rangle$-cosets of $\ff_2^n$.
	
	In the case $t=2$, the task can be solved using Theorem \ref{feleazonos_tobbiparokban}, as $\sum \mathbf{d_i}=\mathbf{0}$ means that both difference classes have even size, and at least half of the differences have to be in the same class. So we have the structure that half of the differences are the same and the rest of the differences can be partitioned into equal-valued pairs.
	
	Our first main result, Theorem \ref{thm-n-2logn-1-osztaly}, states that a suitable perfect matching exists for $t\le n-2\log n-1$. In order to show this result, we will first need some definitions and a technical statement about circuits and simple orderings of sequences in abelian groups.
	
	\subsection{Circuits and simple orders}
	
	In this subsection, we will work in an arbitrary abelian group $G$.
	
	\begin{definition}
		We say a sequence $A=(a_1, a_2, ..., a_k)$ consisting of elements in $G$ is \textit{free of signed $t$-sums} if one cannot choose indices $1\le i_1<i_2<...<i_t\le k$ and signs $\varepsilon_1, ..., \varepsilon_t\in \{\pm1\}$ such that $\sum_{j=1}^t \eps_j i_j=0$. A sequence is \textit{free of signed $<t$-sums} if it is free of signed $t'$-sums for all $1\le t'<t$.
	\end{definition}
	
	\begin{remark}
		Note that in the special case $G=\ff_2^n$, a $k$-element sequence $A$ is free of signed $<k$-sums if and only if the ground set is a \textit{circuit} in the (linear) matroid on $[k]$ with independence defined by linear independence of the corresponding elements of $A$ over $\ff_2$. In particular, if the group has exponent $2$ then we can take all signs $\eps_j$ to be $+1$, i.e. instead of signed sums we just have sums.
		
		Only when $G=\ff_2^n$, by an abuse of terminology we will say that $A$ is a \textit{circuit} in this case. A multiset consisting of elements in $\ff_2^n$ is a \textit{circuit} if its elements form a circuit when listed in any order.
	\end{remark}
	
	We will now define the notion of a \textit{simple ordering} as in \cite{Costa_Morini_Pasotti_Pellegrini}.
	
	\begin{definition}
		We say a sequence $A=(a_1,a_2,...,a_k)$ consisting of elements in $G$ is in a \textit{simple ordering} if the partial sums $a_1, a_1+a_2, ..., a_1+a_2+...+a_k$ are pairwise distinct. A sequence or a multiset is \textit{simply orderable} if its elements can be arranged into a simple ordering.
	\end{definition}

	\begin{remark}
		Note that a zero-sum multiset of $k$ elements in $G$ is simply orderable if and only if its elements can be placed in a cyclic order such that any sequence of $k'$ elements along the cycle (with $1\le k'<k$) have a nonzero sum.
	\end{remark}

	In the literature, if the set of all elements of $G$ is simply orderable then $G$ is called \textit{sequenceable} \cite{Gordon}. In \cite{Costa_Morini_Pasotti_Pellegrini}, several conjectures on simple orderings of sets in cyclic, and more generally, abelian groups are discussed. We highlight the following conjecture:
	
	\begin{conjecture}[Costa, Morini, Pasotti, Pellegrini]\label{simply_orderable_conj}
		Let $G$ be an abelian group and $X\subseteq G\setminus \{0\}$. If $X$ is finite and zero-sum, and there is no element $x$ such that $\{x,-x\}\subseteq X$, then $X$ is simply orderable.
	\end{conjecture}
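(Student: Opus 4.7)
The natural first attempt is strong induction on $k=|X|$. The immediate obstacle is that removing a single element of $X$ destroys the zero-sum hypothesis, so the inductive step must replace elements rather than simply delete them. I would try a \emph{merge--split} reduction: pick two distinct elements $x,y\in X$ with $x+y\neq 0$, $\pm(x+y)\notin X\setminus\{x,y\}$, and (if $G$ has $2$-torsion) $2(x+y)\neq 0$. Then $X':=(X\setminus\{x,y\})\cup\{x+y\}$ is a zero-sum set of size $k-1$ which still contains no pair $\{z,-z\}$, and by the inductive hypothesis admits a simple ordering, say $(a_1,\dots,a_{j-1},x+y,a_{j+1},\dots,a_{k-1})$. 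One then splits $x+y$ back into $(x,y)$ or $(y,x)$ at that position to obtain a candidate simple ordering of $X$ whose only new partial sum is either $s_{j-1}+x$ or $s_{j-1}+y$.

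The bulk of the work is to show that for some valid choice of $(x,y)$ at least one of the two splits is compatible with simplicity, i.e.\ that at least one of $s_{j-1}+x$ and $s_{j-1}+y$ avoids all $k-1$ previous partial sums. Counting the "bad" configurations across all candidate pairs $(x,y)$ and all insertion positions of $x+y$, a double-counting or pigeonhole estimate should yield at least one good reduction whenever $k$ is sufficiently large. Very small $k$ can be verified by direct enumeration, and degenerate groups with many elements of order two or three will likely need separate treatment through ad hoc local moves, such as swapping two adjacent elements of the inductive ordering to change which partial sums occur.

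The main obstacle I expect is the case analysis required in groups with a rich torsion structure. When the exponent of $G$ is small, the family of merge pairs satisfying the requirements above shrinks, the element $x+y$ one generates may coincide with another element of $X$, and even when a valid reduction exists, the insertion step can fail simultaneously for both orderings $(x,y)$ and $(y,x)$. Overcoming this will probably require enlarging the toolbox of reductions — e.g.\ splitting a single element of $X'$ into a \emph{triple} whose sum is that element, or cyclically rotating the inductive ordering to move $x+y$ to a favourable location — together with a careful bookkeeping of which partial sums are available at each insertion point. A cleaner alternative would be to abandon the inductive scheme and instead set up a direct probabilistic or polynomial-method argument producing a simple ordering from the character sum $\sum_{\chi}\prod_{x\in X}\chi$-values; but in the absence of a strong algebraic hypothesis on $G$, the inductive merge--split route seems the most tractable.
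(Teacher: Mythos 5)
This statement is labelled a \emph{conjecture} in the paper and is not proved there: the paper only records that Costa, Morini, Pasotti and Pellegrini verified it for $|X|<10$ and (by computer) for $|G|\le 27$, and the paper's own Proposition~\ref{merging_circuits} establishes just a special case (where the multiset is a union of two circuits) by a union bound over random cyclic orderings together with exhaustive small-case analysis. So there is no proof in the paper to compare yours against, and the question is whether your argument actually settles an open problem. It does not: it is a research plan whose decisive steps are left as assertions. The two load-bearing claims --- that a valid merge pair $(x,y)$ exists, and that ``a double-counting or pigeonhole estimate should yield at least one good reduction whenever $k$ is sufficiently large'' --- are never carried out, and both fail in general. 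For the first: in a group of exponent $2$ the requirement $2(x+y)\ne 0$ is violated by \emph{every} pair, and in groups with rich torsion the admissible pairs (those with $x+y\ne 0$ and $\pm(x+y)\notin X\setminus\{x,y\}$) can simply not exist, e.g.\ when $X$ is close to all of a small group. For the second: each of the two candidate new partial sums $s_{j-1}+x$ and $s_{j-1}+y$ must avoid $k-1$ previously occurring values inside a group that may have only $k+1$ or so elements, so no counting over pairs and positions can succeed without exploiting the algebraic structure of $X$; indeed, if such a pigeonhole argument worked for large $k$, combined with the known small cases it would resolve the conjecture, which remains open.

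There is also a structural problem with the induction itself: $X'=(X\setminus\{x,y\})\cup\{x+y\}$ must again be a \emph{set} with no $\{z,-z\}$ pair for the inductive hypothesis to apply, and you correctly impose $\pm(x+y)\notin X\setminus\{x,y\}$ for this, but you give no argument that the simple ordering of $X'$ produced by induction places $x+y$ at a position where either split works --- you would need control over \emph{which} simple ordering the inductive hypothesis returns, or freedom to permute it, neither of which the hypothesis provides. Your own closing paragraph concedes that the torsion-heavy cases ``will probably require enlarging the toolbox,'' which is precisely where the difficulty of the conjecture lives (the paper's Appendix spends several pages on the analogous difficulties already for $|X|\le 10$ in the two-circuit case). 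In short: the proposal identifies a reasonable line of attack but contains no proof, and the statement should continue to be treated as a conjecture.
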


	In \cite{Costa_Morini_Pasotti_Pellegrini}, the authors prove that the conjecture holds for $|X|<10$, and by computer verification, they report that it is also true when $|G|\le 27$.

	We will now prove a technical proposition about merging together two circuits in an abelian group when certain conditions hold. 
	
	\begin{proposition}\label{merging_circuits}
		Let $G$ be an abelian group, and let $A=(a_1,a_2,...,a_k)$ and $B=(b_1,b_2,...,b_{\ell})$ be zero-sum sequences of elements in $G$, where $2\le k\le \ell$ are integers. Suppose that $A.B$ is free of signed $<k$-sums and $B$ is free of signed $<\ell$-sums. Then $A.B$ is simply orderable, except in the following two special cases:
		
		\begin{itemize}
			\item $k=\ell=2$, $A\sim B\sim (a,-a)$ for some $0\ne a\in G$,
			
			\item $k=2$, $\ell=3$, $A\sim (a,a)$ and $B\sim (a,d,a-d)$ for some $a,d\in G$ with $a,d\ne 0$, $2a=0$, $d\ne a$ and $a+2d\ne 0$.
		\end{itemize}
	\end{proposition}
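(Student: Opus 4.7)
The plan is to recast simple orderability as a cyclic condition: a zero-sum sequence is simply orderable if and only if its elements can be placed in a cyclic order so that every proper nonempty cyclic arc has nonzero sum (cutting the cycle immediately before the full-sum partial sum returns to zero yields the corresponding linear simple ordering). I would first observe that since $A.B$ is free of signed $<k$-sums, the subsequence $A$ inherits this property, and then every linear ordering of $A$ is already a simple ordering: any repeated partial sum of $A$ would correspond to a proper arc of $A$ of size less than $k$ summing to zero. The analogous statement holds for $B$ by the hypothesis on $B$.

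The main strategy is to fix any cyclic ordering $(b_1, \ldots, b_\ell)$ of $B$ and to insert the $k$ elements of $A$ into a choice of $k$ distinct gaps between consecutive $b_i$'s; this is feasible since $k \le \ell$. For any arc of the resulting cycle of size $k + \ell$, its $B$-elements form a contiguous cyclic $B$-arc $Y$ and its $A$-elements form some set $X \subseteq A$. If $X$ is empty or equals all of $A$, then the arc sum equals $\sum Y$, which is nonzero whenever $Y$ is a proper nonempty $B$-arc; moreover, the case $X = A$ with $Y$ empty is precluded because the $a_i$ are placed in distinct gaps (so two $a_i$'s are never cyclically adjacent). Hence the only obstructions come from arcs with $X$ a proper nonempty subset of $A$, for which we must ensure $\sum X + \sum Y \ne 0$.

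From here I would split on $k$. For $k \ge 3$ each bad equation $\sum X = -\sum Y$ combines a signed sum from $A$ with a sum from $B$; the freedom of signed $<k$-sums in $A.B$ forbids enough such equations from holding simultaneously that a greedy or counting argument over the possible gap-assignments produces a valid insertion scheme. For $k = 2$ one has $A \sim (a, -a)$ and we must pick two distinct gaps so that for every relevant cyclic $B$-arc $Y$ one has $\sum Y \ne \pm a$; a direct analysis of the partial-sum sequence of $B$ shows that both gap choices are obstructed simultaneously only when $B$ itself has one of two very specific shapes, corresponding precisely to the two exceptional configurations. Finally, one verifies directly that the two exceptions are not simply orderable: in the first, enumeration of the six orderings of the multiset $\{a, a, -a, -a\}$ shows that a repeated partial sum always appears, and in the second, a pigeonhole argument forces two of the three copies of $a$ to be cyclically adjacent in any arrangement of five elements, creating an arc summing to $2a = 0$.

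The main obstacle I expect is the casework for $k = 2$ with small $\ell$, where the few available gaps mean the obstructions must be enumerated by hand and shown to collapse exactly to the two listed exceptions; this is where the explicit algebraic conditions $2a = 0$, $d \ne 0$, $d \ne a$, $a + 2d \ne 0$ will be pinned down. A secondary subtlety in the $k \ge 3$ argument is handling configurations where no ``one $a_i$ per gap'' assignment works, which requires allowing $A$ to be split into blocks of sizes summing to $k$ across fewer than $k$ gaps, and then re-arguing via the full signed-sum hypotheses that no proper arc of the resulting big cycle has zero sum.
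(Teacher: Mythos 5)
Your plan is a genuinely different strategy from the paper's (the paper takes a uniformly random cyclic ordering of $A.B$ and runs a union bound over zero-sum subintervals, reducing to showing $\frac{(k+\ell)(2^k-1)}{\binom{k+\ell}{k}}<1$, which only succeeds for $\ell\ge 6$ and forces Hamming-bound casework for all $2\le k\le\ell\le 5$). Your reduction of the problem to arcs with $X$ a proper nonempty subset of $A$ is correct and clean, and your verification that the two exceptional configurations are not simply orderable is right. However, as written the proposal has a genuine gap: the entire quantitative core is deferred. For $k\ge 3$ you assert that ``a greedy or counting argument over the possible gap-assignments produces a valid insertion scheme,'' but no such argument is given, and it is exactly here that the difficulty lives. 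The one fact that makes any counting feasible --- that for each fixed $S\subseteq[k]$ there are at most two subsets $T\subseteq[\ell]$ with $\sum A_S+\sum B_T=0$ (a complementary pair), because two non-complementary solutions would produce a signed $<\ell$-sum in $B$ --- is not identified in your proposal, and without it you have no control on the number of bad equations $\sum X=-\sum Y$. Even with it, a count of bad $(X,Y)$ pairs against available gap-assignments will, like the paper's union bound, fail for small $k$ and $\ell$; the paper needs separate ad hoc arguments (minimum-distance/Hamming-bound reasoning on the set of zero-sum index sets, plus explicit orderings) for every pair with $\ell\le 5$, not only for $k=2$ as you anticipate.

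A second, structural concern: you fix the cyclic order of $B$ in advance and restrict to placements with one $a_i$ per gap. This confines you to a strict subclass of all cyclic orderings of $A.B$, and you have not argued that a valid ordering, when one exists, can always be found in this subclass. For instance, which subsets of $B$ are realizable as arcs $Y$ depends on the chosen cyclic order of $B$, so a bad pair $(X,T)$ may be avoidable only by re-ordering $B$ --- a degree of freedom your scheme has frozen. Your closing remark that one may need to ``allow $A$ to be split into blocks across fewer than $k$ gaps'' concedes the point but does not resolve it (and note that placing all of $A$ contiguously is never allowed, since $A$ itself is zero-sum). To turn this into a proof you would need either (i) an explicit count showing that the number of placements exceeds the number of placements killed by bad pairs, carried out uniformly in $k,\ell$, or (ii) the same kind of exhaustive small-case analysis the paper performs; at present neither is supplied.
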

	
	Note that the case $k\ge 3$ would follow from Conjecture \ref{simply_orderable_conj} with the set $X$ consisting of the elements of $A.B$. The proof of this proposition will be probabilistic: we consider a uniformly random cyclic ordering of the elements of $A.B$, and compute upper bounds on the probabilities of each subinterval along the cycle summing to $0$. When $k$ and $\ell$ are both small, some further casework will be needed.
	
	\begin{proof}
		Take a uniformly random cyclic ordering of $A.B$. For each $0\le t\le k+\ell$, denote by $p_t$ the probability that for a uniformly random subset $U\subseteq [k+\ell]$ of size $t$, $(A.B)_U$ sums to $0$. Then clearly $p_t=\frac{N_t}{\binom{k+\ell}{t}}$, where $N_t$ is the number of subsets $U$ of $[k+\ell]$ with $|U|=t$ and $\sum (A.B)_U=0$.
		
		Then for each fixed subinterval of length $t$ along the cycle, the probability that it sums to $0$ is $p_t$, and for a fixed $t$ there are $k+\ell$ such subintervals.
		
		Observe that $\sum (A.B)_U=0$ if and only if $\sum (A.B)_{[k+\ell]\setminus U}=0$, since $\sum A.B=0$. Because of this, when $2\mid k+\ell$ and $t=\frac{k+\ell}{2}$, it is sufficient to only consider $\frac{k+\ell}{2}$ of the possible subintervals (one from each complement pair). Furthermore, since $A.B$ is free of signed $<k$-sums, we have $p_t=N_t=0$ for each $0<t<k$, and by the prior complementing argument, $p_t=N_t=0$ for each $\ell<t<k+\ell$ as well.
		
		Taking a union bound, if we can show that
		
		$$\frac{k+\ell}{2}\left(p_k+p_{k+1}+...+p_{\ell}\right)<1$$
		
		then this means there exists an ordering without any zero-sum subinterval. Note that since $p_t=p_{k+\ell-t}$ for all $k\le t\le \ell$, if we convert each term to a length between $k$ and $\left\lfloor \frac{k+\ell}{2}\right\rfloor$, the sum contains each subinterval length the correct number of times, including the case of $t=\frac{k+\ell}{2}$ when $2\mid k+\ell$. Let us denote $\alpha=\frac{k+\ell}{2}\left(p_k+p_{k+1}+...+p_{\ell}\right)$.
		\medskip
		
		Now we prove that for any $S\subseteq [k]$, there exist at most two subsets $T\subseteq [\ell]$ such that $A_S.B_T$ is zero-sum.
		
	    If $T_1$ and $T_2$ both work where $T_2\neq T_1$ and $T_2\neq [\ell]\setminus T_1$, then $\sum (A_S.B_{T_1})-\sum (A_S.B_{T_2})=\sum B_{T_1\setminus T_2}-\sum B_{T_2\setminus T_1}=0$, meaning that $B$ has a proper nonempty subsequence giving a 0 sum when appropriately signed, so $B$ is not free of signed $<\ell$-sums, contradicting the hypotheses of the Proposition.\medskip
		
	    This means that the total number of $U\subseteq [k+\ell]$ such that $(A.B)_U$ is zero-sum is at most $2^{k+1}$. We have $N_0=N_{k+\ell}=1$, and so $N_k+N_{k+1}+...+N_{\ell}\le 2^{k+1}-2$.
		
		This means that
		\begin{flalign*}
		\alpha &=\frac{k+\ell}{2} \left(\frac{N_k}{\binom{k+\ell}{k}}+\frac{N_{k+1}}{\binom{k+\ell}{k+1}}+...+\frac{N_{\ell}}{\binom{k+\ell}{\ell}}\right)&\\
		&\le \frac{k+\ell}{2}\cdot \frac{1}{\binom{k+\ell}{k}}(N_k+N_{k+1}+...+N_{\ell})\le\frac{(k+\ell)(2^{k+1}-2)}{2\binom{k+\ell}{k}}=\frac{(k+\ell)(2^k-1)}{\binom{k+\ell}{k}}.
		\end{flalign*}
		
		Denoting $f(k,\ell)=\frac{(k+\ell)(2^k-1)}{\binom{k+\ell}{k}}$, if we can show that $f(k,\ell)<1$ for a particular pair $(k,\ell)$ then we will have proved the Proposition for that pair $(k,\ell)$. By direct computation, we have $f(2,6)=\frac67$, $f(3,6)=\frac34$, $f(4,6)=\frac57$, $f(5,6)=\frac{31}{42}$ and $f(6,6)=\frac{9}{11}$ which are all less than 1.
		
		Observe that $\frac{f(k,\ell+1)}{f(k,\ell)}=\frac{k+\ell+1}{k+\ell}\cdot \frac{\binom{k+\ell}{k}}{\binom{k+\ell+1}{k}}=\frac{k+\ell+1}{k+\ell}\cdot \frac{\ell+1}{k+\ell+1}=\frac{1+\ell}{k+\ell}<1$. So if $f(k,\ell)<1$ then $f(k,\ell')<1$ for all $\ell'>\ell$ too. 
		
		Further, $\frac{f(k+1,k+1)}{f(k,k)}=\frac{2k+2}{2k}\cdot \frac{\binom{2k}{k}}{\binom{2k+2}{k+1}}\cdot \frac{2^{k+1}-1}{2^k-1}=\frac{k+1}{k}\cdot \frac{k+1}{4k+2}\cdot \left(2+\frac{1}{2^k-1}\right)$ where for $k\ge 6$, $\frac{(k+1)^2}{k(4k+2)}<\frac13$ and $2+\frac{1}{2^k-1}<3$, meaning that the expression is less than $1$, so by induction, $f(k,k)<1$ for all $k\ge 6$, and combining this with the previous observations, we are done with all cases when $\ell\ge 6$.
		
		Therefore only the cases $(k,\ell)$ with $2\le k\le \ell \le 5$ remain. The proof for these cases can be found in the Appendix.
	\end{proof}
	
	\subsection{$\boldsymbol{u}$-good multisets}
	
	Throughout this subsection, let $\mathbf{u}\in \ff_2^n$ be a fixed nonzero element. We will define $\mathbf{u}$-good multisets, which when found among the prescribed differences, can provide a useful building block for making a perfect matching in the case when many of the given differences are equal to $\mathbf{u}$.
	
	\begin{definition}
		A sequence $(\mathbf{d}_1, \mathbf{d}_2, ..., \mathbf{d}_t)$ of elements in $\ff_2^n$ is said to be in a \textit{mod $\boldsymbol{u}$ simple order} if in $\ff_2^n/\langle \mathbf{u}\rangle$, the sequence $(\mathbf{d}_1+\langle \mathbf{u}\rangle, \mathbf{d}_2+\langle \mathbf{u}\rangle, ..., \mathbf{d}_t+\langle \mathbf{u}\rangle)$ is simply ordered.
		
		A multiset $\{\mathbf{d}_1, \mathbf{d}_2, ..., \mathbf{d}_t\}$ of elements in $\ff_2^n$ is said to be a \textit{mod $\boldsymbol{u}$ circuit} if in the group $\ff_2^n/\langle \mathbf{u}\rangle$, the multiset $\{\mathbf{d}_1+\langle \mathbf{u}\rangle, \mathbf{d}_2+\langle \mathbf{u}\rangle, ..., \mathbf{d}_t+\langle \mathbf{u}\rangle\}$ is a circuit.
	\end{definition}
	
	\begin{definition}\label{def_ugood}
		A multiset $D$ of elements in $\ff_2^n$ will be called \textit{$\boldsymbol{u}$-good} if the following conditions hold:
		
		\begin{itemize}
			\item $\sum D=|D|\mathbf{u}$,
			\item $D$ can be ordered in a mod $\mathbf{u}$ simple way.
		\end{itemize}
	\end{definition}
	
	$\mathbf{u}$-good multisets have the following important placement property.
	
	\begin{proposition}\label{ugood_basicproperty}
		Let $D$ be a $\boldsymbol{u}$-good multiset of elements in $\ff_2^n$. Then in any linear subspace $U\le \ff_2^n$ containing $\boldsymbol{u}$ and all elements of $D$, one can find disjoint pairs $(\boldsymbol{a}_1, \boldsymbol{b}_1), ..., (\boldsymbol{a}_t, \boldsymbol{b}_t)$ such that $\boldsymbol{a}_i-\boldsymbol{b}_i=\boldsymbol{d}_i$ for all $1\le i\le t$, and furthermore the union of the $t$ pairs is equal to the union of $t$ distinct $\langle \boldsymbol{u}\rangle$-cosets of $U$.
	\end{proposition}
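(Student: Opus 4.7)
The plan is to take the $t$ target $\langle\mathbf{u}\rangle$-cosets to be those swept out by the partial sums of the mod $\mathbf{u}$ simple ordering, and then to pick, for each of these cosets, which of its two elements plays which role in the adjacent pairs. The mod $\mathbf{u}$ simple order condition combined with $\sum D = t\mathbf{u}$ will guarantee that the cosets are pairwise distinct, and the consistency of the resulting linear system over $\ff_2$ will come down to a single parity check at the ``wrap-around'' edge.

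Concretely, I would write $D = (\mathbf{d}_1, \dots, \mathbf{d}_t)$ in a mod $\mathbf{u}$ simple order and set $\mathbf{s}_j = \sum_{i \le j} \mathbf{d}_i$, so that $\mathbf{s}_0 = \mathbf{0}$ and $\mathbf{s}_t = t\mathbf{u}$. The simple order condition says the images $\bar{\mathbf{s}}_1, \dots, \bar{\mathbf{s}}_t$ are pairwise distinct in $\ff_2^n/\langle\mathbf{u}\rangle$; since $\bar{\mathbf{s}}_t = 0$, the cosets $C_j := \mathbf{s}_j + \langle\mathbf{u}\rangle$ for $j = 0, 1, \dots, t-1$ are pairwise distinct, with $C_t = C_0$. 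All of them lie in $U$ because $U$ contains $\mathbf{u}$ and every $\mathbf{d}_i$, hence every $\mathbf{s}_j$.

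For each $j \in \{0, 1, \dots, t-1\}$ I would introduce a binary variable $y_j \in \{0,1\}$ and declare $\mathbf{s}_j + y_j\mathbf{u}$ to be the ``source'' element of $C_j$ (used as $\mathbf{b}_{j+1}$, with $\mathbf{b}_{t+1}$ understood as $\mathbf{b}_1$) and $\mathbf{s}_j + (1-y_j)\mathbf{u}$ to be the ``target'' element (used as $\mathbf{a}_j$ for $j \ge 1$, and as $\mathbf{a}_t$ when $j = 0$). By construction each of the two elements of each $C_j$ is used exactly once, so the $t$ pairs are automatically disjoint and their union is $C_0 \cup \dots \cup C_{t-1}$, as required. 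The only thing left to check is $\mathbf{a}_i - \mathbf{b}_i = \mathbf{d}_i$, which a direct expansion reduces to the linear system
\[
y_{i-1} + y_i \equiv 1 \pmod 2 \quad (1 \le i \le t-1), \qquad y_0 + y_{t-1} \equiv 1 + t \pmod 2,
\]
where the extra ``$+t$'' in the wrap-around constraint comes from the discrepancy $\mathbf{s}_t - \mathbf{s}_0 = t\mathbf{u}$.

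Setting $y_j \equiv j \pmod 2$ solves the first $t-1$ equations and gives $y_0 + y_{t-1} \equiv t - 1 \pmod 2$, which indeed equals $1 + t \pmod 2$; so the system is consistent and produces the required pairs. The degenerate case $t = 1$ (where the simple order condition is vacuous and forces $\mathbf{d}_1 = \mathbf{u}$) is handled uniformly, since the single constraint $2y_0 \equiv 0 \pmod 2$ is always satisfied. The only real obstacle in this approach is the potential parity mismatch at the closing edge of the cyclic coset sequence: naively, $t$ odd would be asking for a proper $2$-coloring of an odd cycle, which cannot exist. However, the ``twist'' $\mathbf{s}_t = t\mathbf{u}$, nonzero precisely when $t$ is odd, exactly cancels this obstruction, and this cancellation is the essential content of the proof.
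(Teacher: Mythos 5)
Your proof is correct and is essentially the paper's own argument: solving your parity system with $y_j\equiv j\pmod 2$ yields exactly the pairs $(\mathbf{s}_{i-1}+(i-1)\mathbf{u},\ \mathbf{s}_i+(i-1)\mathbf{u})$ that the paper writes down directly, with the same use of the mod $\mathbf{u}$ simple ordering for distinctness of the cosets and of $\sum D=t\mathbf{u}$ to close the cycle.
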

	\begin{proof}
		Suppose that $(\mathbf{d}_1, ..., \mathbf{d}_t)$ is a mod $\mathbf{u}$ simple ordering of $D$. Consider the following pairs of elements in $U$: $(\mathbf{0}, \mathbf{d}_1), (\mathbf{d}_1+\mathbf{u}, \mathbf{d}_1+\mathbf{d}_2+\mathbf{u}), (\mathbf{d}_1+\mathbf{d}_2, \mathbf{d}_1+\mathbf{d}_2+\mathbf{d}_3), ..., (\mathbf{d}_1+\mathbf{d}_2+...+\mathbf{d}_{i-1}+(i-1)\mathbf{u}, \mathbf{d}_1+\mathbf{d}_2+...+\mathbf{d}_{i-1}+\mathbf{d}_i+(i-1)\mathbf{u}), ..., (\mathbf{d}_1+\mathbf{d}_2+...+\mathbf{d}_{t-1}+(t-1)\mathbf{u}, \mathbf{d}_1+\mathbf{d}_2+...+\mathbf{d}_{t-1}+\mathbf{d}_t+(t-1)\mathbf{u}=\mathbf{u})$. These pairs have difference $\mathbf{d}_1, \mathbf{d}_2, ..., \mathbf{d}_t$ respectively and are all disjoint since $\mathbf{0}, \mathbf{d}_1, \mathbf{d}_1+\mathbf{d}_2, ..., \mathbf{d}_1+\mathbf{d}_2+...+\mathbf{d}_{t-1}$ are pairwise distinct mod $\mathbf{u}$. Furthermore, these pairs fully cover $t$ distinct $\langle \mathbf{u}\rangle$-cosets in $U$.
	\end{proof}
	
	\begin{remark}
		Clearly Proposition \ref{ugood_basicproperty} also works for an affine subspace $\mathbf{x}+U$ instead of $U$, where $U$ contains $\mathbf{u}$ and all elements of $D$: just add $\mathbf{x}$ to $\mathbf{a}_i$ and $\mathbf{b}_i$ for all $1\le i\le t$.
	\end{remark}
	
	The following versatile proposition shows the usefulness of $\mathbf{u}$-good multisets:
	
	\begin{proposition}\label{ugood_partition}
		In $\ff_2^n$, let $S$ be a multiset with $\sum S=|S|\boldsymbol{u}$ which does not contain any elements congruent to $\boldsymbol{0}$ mod $\boldsymbol{u}$. Then the elements of $S$ can be partitioned into $\boldsymbol{u}$-good multisets.
	\end{proposition}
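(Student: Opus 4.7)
The natural approach is strong induction on $|S|$. The base case $|S|=2$ is forced by the sum condition to be $S=\{d,d\}$ for some $d\in\ff_2^n\setminus\{\mathbf{0},\mathbf{u}\}$; this pair is $\mathbf{u}$-good because the ordering $(d,d)$ has partial sums $\bar{d},\mathbf{0}$ in $V=\ff_2^n/\langle\mathbf{u}\rangle$, which are distinct since $\bar{d}\ne\mathbf{0}$. For $|S|\ge 3$ the goal is to peel off a $\mathbf{u}$-good sub-multiset $D$ and apply the inductive hypothesis to the remainder. This is well-posed since $\sum D=|D|\mathbf{u}$ guarantees $\sum(S\setminus D)=|S\setminus D|\mathbf{u}$, and removing a sub-multiset introduces no new element congruent to $\mathbf{0}$ modulo $\mathbf{u}$.

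I would locate $D$ through a short cascade of candidate pieces. First, if some $d\in S$ has multiplicity at least $2$, take $D=\{d,d\}$. Otherwise $S$ is a set and every coset in $\bar{S}$ has multiplicity at most $2$. If three distinct elements $d_1,d_2,d_3\in S$ satisfy $d_1+d_2+d_3=\mathbf{u}$, take $D=\{d_1,d_2,d_3\}$: the three cosets must be pairwise distinct (otherwise some $d_i$ would lie in $\langle\mathbf{u}\rangle$), so every ordering is mod $\mathbf{u}$ simple. Failing that, if four distinct elements $d_1,\ldots,d_4\in S$ sum to $\mathbf{0}$, take them as $D$: using the multiplicity-at-most-$2$ structure of $\bar{S}$ one checks that their cosets are either all four distinct (any ordering works) or split into two distinct pairs of equal cosets, in which case the ordering $(d_1,d_3,d_2,d_4)$ yields partial sums $\bar{d}_1,\bar{d}_1+\bar{d}_3,\bar{d}_3,\mathbf{0}$, distinct because $\bar{d}_1\ne\bar{d}_3$.

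The crux is the residual case, where $S$ is a set, no triple sums to $\mathbf{u}$, and no quadruple sums to $\mathbf{0}$. Here I would argue that $S$ itself is $\mathbf{u}$-good by constructing an explicit mod-$\mathbf{u}$ simple ordering. The plan is to decompose the multiplicity-$1$ part of $\bar{S}$ into circuits in $V$ via a standard greedy procedure (each circuit is itself simply orderable, since any nonempty proper subset has nonzero sum), and then concatenate these circuits while inserting the two copies of the (at most one) multiplicity-$2$ coset $c^*$ of $\bar{S}$ at appropriately chosen positions. The residual arithmetic constraints --- absence of triples summing to $\mathbf{u}$ and of quadruples summing to $\mathbf{0}$ --- translate into obstructions that prevent collisions between partial sums across distinct circuits and between the two copies of $c^*$. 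The main obstacle I anticipate is making this interleaving argument fully rigorous: guaranteeing that no two partial sums along the full concatenation ever coincide will require a detailed case analysis on the number and sizes of circuits produced by the decomposition, and on how the multiplicity-$2$ coset must be threaded through them.
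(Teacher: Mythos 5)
Your peeling steps for sizes $2$, $3$ and $4$ are correct (they coincide with the smallest mod-$\mathbf{u}$ circuits that the paper's greedy procedure extracts first), and the induction is well set up. But the residual case is not a side issue to be cleaned up later --- it is the entire content of the proposition, and your plan for it does not work as stated. After the peeling, $\overline{S}$ can still decompose into many pairwise disjoint mod-$\mathbf{u}$ circuits of size $5$ or more; the constraints you carry into the residual case (no repeated element, no triple summing to $\mathbf{u}$, no quadruple summing to $\mathbf{0}$) only forbid \emph{short} zero-sum subsets and say nothing about, say, a $5$- or $6$-element subinterval of your concatenation summing to $\mathbf{0}$ mod $\mathbf{u}$. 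Concatenating simply ordered circuits fails precisely because a partial sum taken from the end of one block plus the start of the next can vanish, and nothing you have excluded rules this out. Worse, your claim that the residual $S$ is $\mathbf{u}$-good \emph{as a whole} amounts to simply ordering an arbitrary disjoint union of circuits in $\ff_2^{n-1}$ (with one possible repeated element), which is essentially Conjecture \ref{simply_orderable_conj} of Costa--Morini--Pasotti--Pellegrini --- an open problem that the paper explicitly avoids rather than solves.

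The paper's route around this is structurally different in two ways you would need to adopt. First, it never tries to order all of $S$ at once: it greedily extracts mod-$\mathbf{u}$ circuits of \emph{every} size (smallest first), keeps each ``good-parity'' circuit ($\sum C=|C|\mathbf{u}$) as its own $\mathbf{u}$-good class (any ordering of a single circuit is simple mod $\mathbf{u}$), and only ever needs to merge \emph{two} circuits at a time, namely the ``bad-parity'' ones ($\sum C=(|C|+1)\mathbf{u}$), which come in an even number and are paired up. Second, even that two-circuit merge is nontrivial: it is Proposition \ref{merging_circuits}, proved by a union bound over random cyclic orderings plus extensive small-case analysis in the Appendix, and its hypothesis that $A.B$ is free of signed $<\!|A|$-sums is guaranteed exactly by the smallest-circuit-first extraction order. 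Your proposal contains no substitute for this merging lemma, so the argument has a genuine gap at its core.
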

	\begin{proof}
		We will say that a mod $\mathbf{u}$ circuit $C$ has \textit{good parity} if $\sum C=|C|\mathbf{u}$, and \textit{bad parity} if $\sum C=(|C|+1)\mathbf{u}$.
		
		Iterate the following procedure: choose the smallest available mod $\mathbf{u}$ circuit among the elements of $S$. If there are multiple smallest circuits, then they can be chosen in any order, except that among the $2$-element circuits, we first choose the good-parity ones (those of the form $(\mathbf{s},\mathbf{s})$) and then the bad-parity ones (those of the form $(\mathbf{s}, \mathbf{s}+\mathbf{u})$).
		
		In this way, all elements of $S$ have been partitioned into mod $\mathbf{u}$ circuits, since $\sum S\equiv 0\pmod{\mathbf{u}}$ and each circuit also has $\mathbf{0}$ sum mod $\mathbf{u}$, so the remaining elements always have $\mathbf{0}$ sum mod $\mathbf{u}$, so are linearly dependent mod $\mathbf{u}$, so a mod $\mathbf{u}$ circuit can always be chosen as long as there are still some elements remaining.
		
		Furthermore, since $\sum S=|S|\mathbf{u}$, we have $\sum\limits_C (\sum C-|C|\mathbf{u})=\mathbf{0}$ (where $C$ iterates through the circuits in the obtained partitioning), so the total number of bad-parity circuits in the partitioning is even (as good-parity circuits contribute $\mathbf{0}$ to the sum and bad-parity circuits contribute $\mathbf{u}$).
		
		Now construct the classes of the partitioning in the following way: let each good circuit form its own class, and out of the bad circuits, form pairs in an arbitrary way, and every such pair of circuits is combined to form a class.
		
		We now show that every such class is $\mathbf{u}$-good indeed: the first condition is satisfied, as good-parity circuits have $\sum C=|C|\mathbf{u}$, and if $C_1$ and $C_2$ are bad-parity circuits, then $\sum (C_1.C_2)=\sum C_1+\sum C_2=(|C_1|+1)\mathbf{u}+(|C_2|+1)\mathbf{u}=|C_1.C_2|\mathbf{u}$.
		
		For a good-parity circuit $C=(\mathbf{c}_1, \mathbf{c}_2, ..., \mathbf{c}_t)$, every ordering is mod $\mathbf{u}$ simple: supposing we had $\mathbf{c}_1+...+\mathbf{c}_{\ell}\equiv \mathbf{c}_1+...+\mathbf{c}_{\ell'} \pmod{\mathbf{u}}$ for $0\le \ell<\ell'\le t-1$, then this would mean  $\mathbf{c}_{\ell+1}+\mathbf{c}_{\ell+2}+...+\mathbf{c}_{\ell'}\equiv \mathbf{0}\pmod{\mathbf{u}}$, meaning that $C$ has a linearly dependent proper subset, contradicting the fact that it is a circuit.
		
		For a pair of bad-parity circuits $A=(\mathbf{a}_1, ..., \mathbf{a}_k)$ and $B=(\mathbf{b}_1, ..., \mathbf{b}_{\ell})$, where $A$ was chosen earlier than $B$ without loss of generality, let $\overline{A}=(\overline{\mathbf{a}_1}, ..., \overline{\mathbf{a}_k})$ and $\overline{B}=(\overline{\mathbf{b}_1}, ..., \overline{\mathbf{b}_{\ell}})$ denote the sequences where each element is replaced by its $\langle \mathbf{u}\rangle$-coset. Now in the group $\ff_2^n/\langle \mathbf{u}\rangle$, apply Proposition \ref{merging_circuits} to $\overline{A}$ and $\overline{B}$, whose conditions are satisfied:
		
		\begin{itemize}
			\item they are zero-sum sequences with $2\le k\le \ell$,
			\item $\overline{A}.\overline{B}$ is free of $<k$-sums, as otherwise $A$ would not have been the smallest mod $\mathbf{u}$ circuit at the time it was chosen,
			\item $\overline{B}$ is free of $<\ell$-sums, as otherwise $B$ would not be a mod $\mathbf{u}$ circuit.
		\end{itemize}
		
		Also note that the two exceptional cases in this Proposition cannot occur, as both of them involve three equal elements in $\overline{A}.\overline{B}$ (in the first exceptional case we would have $a=-a$ since $\ff_2^n/\langle \mathbf{u}\rangle$ has exponent $2$), however any pairs of equal elements in $S$ have been removed at the beginning of the procedure (as a 2-element good-parity circuit), so the union of all bad circuits, and hence $\overline{A}.\overline{B}$, can only contain at most two copies of the same element mod $\mathbf{u}$.
		
		This means that by the Proposition, $\overline{A}.\overline{B}$ can be simply ordered, so $A.B$ can be simply ordered mod $\mathbf{u}$.
	\end{proof}
	
	\begin{remark}\label{ugood_partition_sizeremark}
		As in $\ff_2^n/\langle \mathbf{u}\rangle$, the largest independent set over $\ff_2$ has size $n-1$, each mod $\mathbf{u}$ circuit has size $\le n$. So the proof of Proposition \ref{ugood_partition} obtains a partitioning where each $\mathbf{u}$-good multiset has size $\le 2n$.
	\end{remark}
	
	\subsection{Proof of Theorem \ref{thm-n-2logn-1-osztaly}}
	
	We are now ready to prove our first main result:
	\medskip
	
	\noindent \textbf{Theorem \ref{thm-n-2logn-1-osztaly}.} \textit{The main conjecture is true in the case when the number of distinct values among the difference vectors is at most $n-2\log n-1$.}
	
	\begin{proof}
		Let the distinct values of the given differences be $\mathbf{u_1}$, $\mathbf{u_2}$, $\dots$, $\mathbf{u_t}$, where $t\le n-2\log n-1$, and for each $1\le i\le t$, $\mathbf{u_i}$ appears $n_i$ times with $n_1\ge n_2\ge \dots\ge n_t$. Here we have $\sum_{i=1}^t n_i=m=2^{n-1}$. In particular, $n_1\ge \frac{m}{t}$.
		
		In $\ff_2^n$, a vector space over $\ff_2$, let $U=\langle \mathbf{u_1},\dots, \mathbf{u_t}\rangle$ and let $k=\dim U$. (Then $k\le t$.) We can assume that $k\ge 2$, as otherwise $t=1$, and this is a case that we have already seen. Call the $U$-cosets of $\ff_2^n$ \textit{layers}. Then in the perfect matching that we will make, each vector pair has to be within a layer. We will create perfect matchings of each layer separately, and we will not modify any finished layers later. Altogether there are $2^{n-k}$ layers.
		\medskip
		
		\noindent Our algorithm will consist of 3 phases:
		
		\begin{itemize}
			\item \textit{Phase 1:} We will create perfect matchings in some (less than $t$) layers in such a way that an even number of vectors will remain in each difference class.
			
			\item \textit{Phase 2:} We will create perfect matchings in some (less than $t$) layers in such a way that in each difference class, the number of remaining vectors will be divisible by $2^{k-1}$.
			
			\item \textit{Phase 3:} All of the remaining differences will be used to create homogeneous layers (i.e. layers consisting of differences from only one class).\\
		\end{itemize}
		
		\noindent Now we explain the details of each phase.
		
		\noindent \textit{Phase 1.} Let $H=\{\mathbf{u_i}: 2\le i\le t, n_i\equiv 1\pmod{2}\}$, and we will use the notation $\mathbf{u}=\mathbf{u_1}$.
		
		Using Proposition \ref{ugood_partition}, we partition $H$ into $\mathbf{u}$-good multisets. This is possible, since $0=\sum_{i=1}^t n_i\mathbf{u}_i = n_1\mathbf{u}+\sum H$, and the number of elements in $H$ has the same parity as the total number of all differences not equal to $\mathbf{u}$, which in turn has the same parity as the total number of differences equal to $\mathbf{u}_1$, and hence $|H|\mathbf{u}=n_1\mathbf{u}=\sum H$.
		
		Take the $\mathbf{u}$-good multisets in turn, and for each multiset $D$, make a perfectly matched layer in the following way: use Proposition \ref{ugood_basicproperty} for an unused layer $\mathbf{x}+U$. It can be used since $U$ contains $\mathbf{u}$ and all elements of $D$. So we get $|D|$ disjoint pairs in $\mathbf{x}+U$ using the differences in $D$, whose union is equal to a union of $\langle \mathbf{u}\rangle$-cosets, therefore the matching of the layer can be completed using differences of value $\mathbf{u}$.
		
		After this phase, the number of remaining differences is even in the classes of $\mathbf{u}_2, ..., \mathbf{u}_t$, and this is true for $\mathbf{u}$ as well, as each remaining layer requires an even number of differences in total.\\
		
		\noindent \textit{Phase 2.} Perform the following step for each $2\le i\le t$ in turn.
		
		If the number of remaining copies of $\mathbf{u_i}$ has a remainder of $m_i$ modulo $2^{k-1}$ (where $2\le m_i\le 2^{k-1}-2$ is even), then let us make a perfect matching of a new layer using $m_i$ copies of $\mathbf{u_i}$ and $2^{k-1}-m_i$ copies of $\mathbf{u}$. This can be done, as this is the problem in the main conjecture for $\ff_2^k$ (actually for a translate of it, but this does not matter), and as $m_i$ and $2^{k-1}-m_i$ are even, the sum of the difference vectors we need to use is $\textbf{0}$, and we have already resolved the conjecture in the case of two difference classes.
		
		(If $m_i=0$ then we do not need to do anything with the $i$-th class.)\\
		
		\noindent \textit{Phase 3.} As the number of remaining vectors in each class is divisible by $2^{k-1}$, and for any $\mathbf{0}\ne \mathbf{d}\in U$ we can partition $U$ (or any translate of it) into $2^{k-1}$ pairs of difference $\mathbf{d}$, this phase can be trivially performed, completing the perfect matching of $\ff_2^n$ in the required manner.\\
		
		Observe that all three phases can always be performed: in phase 1, in every layer we use at least 3 elements of $H$, so we make at most $\frac{t-1}{3}$ completed layers. And in phase 2, we make less than $t$ layers. So in the first two phases, altogether we used $\le \frac{4}{3}(t-1)\cdot 2^{k-1}$ copies of $\mathbf{u}$. And indeed we did have this many copies of $\mathbf{u}$ at our disposal, as
		$$\frac43t(t-1)\cdot 2^{k-1}\le \frac43t(t-1)\cdot 2^{t-1}\le \frac43t(t-1)\cdot 2^{n-2\log n-2}=\frac13 t(t-1)\frac{2^n}{n^2}<\frac13\cdot 2^n<2^{n-1}$$
		
		and so $\frac43(t-1)\cdot 2^{k-1}< \frac{2^{n-1}}{t}=\frac{m}{t}\le n_1$.
	\end{proof}
	
	\section{Perfect matching in the case of many equal vectors}
	
	In this chapter, we resolve the main conjecture (for sufficiently large $n$) in the special case when at least a fraction $\frac12+\eps$ of the difference vectors are all equal, and the others are arbitrary. So in contrast to the theorem of Balister, Győri and Schelp (see Theorem \ref{feleazonos_tobbiparokban}), here we do not require that all differences appear an even number of times.
	
	\subsection{Affine flats}
	
	In order to create a perfect matching of $\ff_2^n$ according to the given differences in the main conjecture, it is worthwhile to group the given differences into classes with nice additive combinatorial properties. An example of such a class would be a $k$-dimensional affine subspace of $\ff_2^n$ (also called a \textit{$k$-flat}).
	
	For a fixed $k\ge 1$, we are interested in the minimal integer $s$ (as a function of $n$) such that any subset $S\subseteq \ff_2^n$ of size $s$ is guaranteed to contain a $k$-flat.
	
	In the case $k=2$, a set $S\subseteq \ff_2^n$ contains a $2$-flat if and only if it is not a Sidon set. (A subset $S$ of an abelian group $G$ is called a \textit{Sidon set} if the only solutions to $a+b=c+d$ with $a,b,c,d\in G$ are the trivial solutions where $(c,d)$ is a permutation of $(a,b)$.) Bose and Ray-Chaudhuri \cite{BR60} have shown that for $k=2$, the smallest such $s$ is $\Theta(2^{n/2})$.
	
	It turns out that even for larger (fixed) values of $k$, the smallest size $s$ required is still exponentially small compared to $2^n$. Going along the lines of Szemerédi's Cube Lemma (see \cite[Corollary 2.1]{Setyawan}), a result of Bonin and Qin \cite[Lemma 21]{Bonin_Qin} states the following:
	
	\begin{proposition}\label{kflat_findable}
		Given integers $n\ge k\ge 2$, and any $s\ge 2\cdot 2^{n\left(1-\frac{1}{2^{k-1}}\right)}$, any subset $S\subseteq \ff_2^n$ with $|S|\ge s$ contains a $k$-flat.
	\end{proposition}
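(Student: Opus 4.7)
My plan is to prove Proposition \ref{kflat_findable} by induction on $k\ge 2$, at each step quotienting by a popular difference in order to trade one dimension of the ambient space for one dimension of the flat being built.

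For the base case $k=2$, a $2$-flat in $\ff_2^n$ is any four-element subset $\{a,b,c,d\}$ with $a+b+c+d=\boldsymbol{0}$, which exists in $S$ precisely when two distinct unordered pairs in $S$ share a sum. Since $|S|\ge 2\cdot 2^{n/2}$ while $|S|\le 2^n$, one has $\binom{|S|}{2}\ge 3\cdot 2^{n-1}>2^n-1$, so pigeonhole on the nonzero pair-sums forces two pairs to coincide and produce a $2$-flat.

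For the inductive step, assume the result for $k-1\ge 2$ and let $|S|\ge 2\cdot 2^{n(1-1/2^{k-1})}$ in $\ff_2^n$. The identity $\sum_{\boldsymbol{d}\ne\boldsymbol{0}}|S\cap(S+\boldsymbol{d})|=|S|(|S|-1)$ yields a nonzero $\boldsymbol{d}^*$ with $|T|\ge\frac{|S|(|S|-1)}{2^n-1}$, where $T:=S\cap(S+\boldsymbol{d}^*)$. Crucially, $T$ is $\langle\boldsymbol{d}^*\rangle$-invariant: $x\in T$ means $x,x+\boldsymbol{d}^*\in S$, which then forces $x+\boldsymbol{d}^*\in T$ as well. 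Hence $T$ is a disjoint union of $|T|/2$ cosets of $\langle\boldsymbol{d}^*\rangle$. Let $\pi\colon\ff_2^n\to\ff_2^n/\langle\boldsymbol{d}^*\rangle\cong\ff_2^{n-1}$ be the quotient projection; then $\bar T:=\pi(T)$ has size $|T|/2\ge\frac{|S|(|S|-1)}{2(2^n-1)}$. A direct estimate shows this is at least $2\cdot 2^{(n-1)(1-1/2^{k-2})}$, so the inductive hypothesis applied in $\ff_2^{n-1}$ returns a $(k-1)$-flat $\bar F\subseteq\bar T$. Its preimage $F:=\pi^{-1}(\bar F)$ is then a $k$-flat in $\ff_2^n$ (since $\pi$ has one-dimensional kernel), and every fibre of $\pi$ over a point of $\pi(T)$ lies inside $T\subseteq S$, so $F\subseteq S$ as required.

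The main obstacle is the numerical verification underpinning the reduction. Writing $\alpha=1-1/2^{k-1}$ and $\beta=2\alpha-1=1-1/2^{k-2}$, the leading terms of $\frac{|S|(|S|-1)}{2(2^n-1)}$ and of the target $2\cdot 2^{(n-1)\beta}$ differ only by a factor of $2^\beta$, which is merely $\sqrt{2}$ at $k=3$. Care is needed to show that this slim slack is not consumed by the subleading corrections (coming from $|S|{-}1$ versus $|S|$, from $2^n{-}1$ versus $2^n$, and from the halving $|T|\mapsto|\bar T|$). However, the statement is vacuous unless $n\ge 2^{k-1}$, since only then is $2\cdot 2^{n(1-1/2^{k-1})}\le 2^n$, so in the nontrivial regime the ambient dimension $n$ is already large enough for these corrections to be absorbed comfortably.
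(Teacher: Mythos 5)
Your proof is correct. The paper does not actually prove Proposition \ref{kflat_findable}---it imports it from Bonin and Qin---and your argument is exactly the standard Szemer\'edi-cube-lemma induction underlying the cited result: average over the $2^n-1$ nonzero differences to find a popular $\boldsymbol{d}^*$, pass to the $\langle\boldsymbol{d}^*\rangle$-invariant set $T=S\cap(S+\boldsymbol{d}^*)$, quotient by $\langle\boldsymbol{d}^*\rangle$, and recurse; the base case $k=2$ via colliding pair-sums is also sound, since two distinct pairs with equal sum are automatically disjoint and a four-element zero-sum subset of $\ff_2^n$ is a $2$-flat. The one step you leave as ``a direct estimate'' does close, with no need to invoke the nonvacuousness condition $n\ge 2^{k-1}$: writing $\alpha=1-1/2^{k-1}$ and $\beta=2\alpha-1=1-1/2^{k-2}\ge\tfrac12$ (as $k\ge3$ in the inductive step), one gets $|\overline{T}|\ge\frac{|S|^2-|S|}{2\cdot 2^{n}}\ge 2\cdot 2^{n\beta}-\tfrac12=2^{\beta}\cdot 2\cdot 2^{(n-1)\beta}-\tfrac12\ge 2\cdot 2^{(n-1)\beta}$, because $(2\sqrt2-2)\cdot 2^{(n-1)\beta}\ge 2\sqrt2-2>\tfrac12$; so the inductive hypothesis applies in $\ff_2^{n-1}$ (note $n-1\ge k-1\ge 2$) and the argument is complete.
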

	
	This will mean that for $n$ sufficiently large compared to $k$, and a given set $D$ of differences with $|D|\gg 2^{n\left(1-\frac{1}{2^{k-1}}\right)}$, almost all elements of $D$ may be partitioned into $k$-flats.
	
	Note that an easy corollary of Lemma \ref{vektorterben_osszeg0} is that for any $k\ge 2$, the sum of the elements of any $k$-flat in $\ff_2^n$ is $\mathbf{0}$. We will use this fact later.
	
	\subsection{Blocking ratios}
	
	In order for us to flexibly find partial matchings among a certain set of elements in $\ff_2^n$ according to given sequences of differences, we define the following notions:
	
	\begin{definition}
		Given a multiset $D=\{\mathbf{d}_1, \mathbf{d}_2, ..., \mathbf{d}_k\}$ of differences in $\ff_2^n$, define a subset $S\subseteq \ff_2^n$ to be \textit{blocking for $D$} if in $\ff_2^n\setminus S$ there do not exist $k$ pairwise disjoint pairs $(\mathbf{a}_1, \mathbf{b}_1), (\mathbf{a}_2,\mathbf{b}_2), ..., (\mathbf{a}_k,\mathbf{b}_k)$ of elements such that $\mathbf{a}_i-\mathbf{b}_i=\mathbf{d}_i$ for all $1\le i\le k$.
		
		Let the \textit{blocking ratio} of $D$ be defined as 
		
		$$\beta(D)=\frac{\min \{ |S|:~S\subseteq \ff_2^n,~S\textrm{ is blocking for }D\}}{2^n}.$$
	\end{definition}
	
	\vspace{5mm}
	
	For example, for any nonzero $\mathbf{d}\in \ff_2^n$, $\{\mathbf{d}\}$ has a blocking ratio of $\frac12$, since a set $S\subseteq \ff_2^n$ is blocking for $\{\mathbf{d}\}$ if and only if it contains at least one element of each $\langle \mathbf{d}\rangle$-coset.
	
	Since in our algorithm, we will pack differences while respecting cosets of $\langle \mathbf{u}\rangle$, where $\mathbf{u}$ is the most commonly appearing difference, it is worthwhile to also define the following analogous version of the blocking ratio:
	
	\begin{definition}
		Let $\mathbf{u}\in \ff_2^n\setminus \{\mathbf{0}\}$ be fixed. Define a subset $S\subseteq \ff_2^n$ to be \textit{$\boldsymbol{u}$-blocking for $D$} if $S$ is a union of some $\langle \mathbf{u}\rangle$-cosets and in $\ff_2^n\setminus S$ there do not exist $k$ pairwise disjoint pairs $(\mathbf{a}_1, \mathbf{b}_1), (\mathbf{a}_2,\mathbf{b}_2), ..., (\mathbf{a}_k,\mathbf{b}_k)$ of elements such that the following two conditions hold:
		\begin{itemize}
			\item $\mathbf{a}_i-\mathbf{b}_i=\mathbf{d}_i$ for all $1\le i\le k$, 
			\item and $\{\mathbf{a}_1, ..., \mathbf{a}_k, \mathbf{b}_1, ..., \mathbf{b}_k\}$ is equal to a union of some $\langle \mathbf{u}\rangle$-cosets.
		\end{itemize}
		
		Let the \textit{$\boldsymbol{u}$-blocking ratio} of $D$ be defined as
		
		$$\beta_{\mathbf{u}}(D)=\frac{\min \{ |S|:~S\subseteq \ff_2^n,~S\textrm{ is }\mathbf{u}\textrm{-blocking for }D\}}{2^n}.$$
	\end{definition}
	
	\vspace{5mm}
	
	As an example, $\beta_{\mathbf{u}}(\{\mathbf{u}\})=1$, and for any $\mathbf{0}\ne \mathbf{d}\ne \mathbf{u}$, $\beta_{\mathbf{u}}(\{\mathbf{d},\mathbf{d}\})=\frac12$, since in $\ff_2^n/\langle \mathbf{u}\rangle$, the cosets can be paired to each other based on congruence modulo $\mathbf{d}$, and a set (consisting of full $\langle \mathbf{u}\rangle$-cosets) is $\mathbf{u}$-blocking for $\{\mathbf{d},\mathbf{d}\}$ if and only if at least one $\langle \mathbf{u}\rangle$-coset is chosen from each such pair. If assigning disjoint pairs to the differences in $D$ is impossible in such a way that the pairs together only cover full $\langle \mathbf{u}\rangle$-cosets (such as in the case $D=\{\mathbf{d}\}$ for $\mathbf{0}\ne \mathbf{d}\ne \mathbf{u}$), then $S=\emptyset$ will be $\mathbf{u}$-blocking for $D$, so $\beta_{\mathbf{u}}(D)=0$.
	
	The following proposition will be used for constructing a perfect matching of $\ff_2^n$ according to a list of given differences grouped into classes that have large enough $\mathbf{u}$-blocking ratios:
	
	\begin{proposition}\label{construction_scheme}
		In the main conjecture, suppose that the multiset of given differences $(\boldsymbol{d}_1, \boldsymbol{d}_2, ..., \boldsymbol{d}_m)$ is partitioned into several classes $D_1, D_2, ..., D_{\ell}$ such that for some fixed value of $\boldsymbol{u}\in \ff_2^n\setminus \{\boldsymbol{0}\}$, the following holds for all $1\le i\le \ell$: $$\beta_{\mathbf{u}}(D_i)> \frac{\sum\limits_{j=1}^{i-1} |D_j|}{2^{n-1}}.$$
		
		Then there is an appropriate perfect matching of $\ff_2^n$ corresponding to the given differences.
	\end{proposition}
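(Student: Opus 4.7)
The plan is to process the difference classes in the given order $D_1, D_2, \ldots, D_\ell$ greedily, building up a partial matching one class at a time, and to use the $\mathbf{u}$-blocking ratio hypothesis to guarantee that each step succeeds. The key invariant I would maintain is: after processing $D_1, \ldots, D_{i-1}$, the set $S_i$ of all elements of $\ff_2^n$ that are already used in the partial matching is a union of $\langle \mathbf{u}\rangle$-cosets, and satisfies $|S_i| = 2\sum_{j<i} |D_j|$.

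For the inductive step, I would fix $i$ and assume the invariant holds for $S_i$. Rewriting the hypothesis on the blocking ratio as
\[
2^n \cdot \beta_{\mathbf{u}}(D_i) \;>\; 2\sum_{j<i} |D_j| \;=\; |S_i|,
\]
we see that $|S_i|$ is strictly smaller than the minimum size of any $\mathbf{u}$-blocking set for $D_i$, so $S_i$ itself is not $\mathbf{u}$-blocking for $D_i$. Since $S_i$ is a union of $\langle \mathbf{u}\rangle$-cosets by the invariant, the only way $S_i$ can fail the definition of $\mathbf{u}$-blocking is that the second clause of that definition holds: there exist pairwise disjoint pairs $(\mathbf{a}_1,\mathbf{b}_1),\ldots,(\mathbf{a}_{|D_i|},\mathbf{b}_{|D_i|})$ in $\ff_2^n \setminus S_i$ realising the differences in $D_i$ such that their union is a union of $\langle \mathbf{u}\rangle$-cosets of $\ff_2^n$.

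I would then add these pairs to the partial matching, obtaining a new used set $S_{i+1} = S_i \cup \{\mathbf{a}_1,\mathbf{b}_1,\ldots,\mathbf{a}_{|D_i|},\mathbf{b}_{|D_i|}\}$. By construction, $S_{i+1}$ is again a union of $\langle \mathbf{u}\rangle$-cosets and $|S_{i+1}| = |S_i| + 2|D_i| = 2\sum_{j\le i}|D_j|$, so the invariant is preserved. Iterating through $i=1,\ldots,\ell$, the procedure terminates with $|S_{\ell+1}| = 2m = 2^n$, i.e.\ $S_{\ell+1} = \ff_2^n$, giving a perfect matching of $\ff_2^n$ whose edge-differences form the prescribed multiset.

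There is no substantive obstacle here: the proposition is essentially a greedy-extension argument tailored precisely to the definition of $\beta_{\mathbf{u}}$. The only point requiring a little care is to remember that the ``not $\mathbf{u}$-blocking'' conclusion only yields the desired pairs once we have separately verified that $S_i$ is itself a union of $\langle \mathbf{u}\rangle$-cosets, which is why the invariant must be stated and maintained in that form rather than merely controlling $|S_i|$.
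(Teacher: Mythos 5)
Your proposal is correct and is essentially identical to the paper's own proof: both process the classes greedily in the given order, maintain the invariant that the used set is a union of $\langle \mathbf{u}\rangle$-cosets of size $2\sum_{j<i}|D_j|$, and invoke the hypothesis $\beta_{\mathbf{u}}(D_i)>\sum_{j<i}|D_j|/2^{n-1}$ to conclude that the used set is not $\mathbf{u}$-blocking for $D_i$. Your remark about why the coset-union property must be carried in the invariant is a worthwhile clarification but does not change the argument.
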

	\begin{proof}
		We construct a perfect matching by iteratively going through each class $D_i$ in increasing order of $i$. For each $D_i$, we assign to the differences in $D_i$ disjoint pairs of elements of $\ff_2^n$ that have not yet been used for previous classes  $D_j$ ($j<i$). Our procedure will respect $\langle \mathbf{u}\rangle$-cosets, meaning that for each $i$, the elements of $\ff_2^n$ assigned to the differences in $D_i$ will form a union of $\langle \mathbf{u}\rangle$-cosets.
		
		For each $D_i$, the number of elements of $\ff_2^n$ already used for previous classes is $2\sum_{j=1}^{i-1} |D_j|$, and since $$\beta_{\mathbf{u}}(D_i)>\frac{2\sum\limits_{j=1}^{i-1} |D_j|}{2^n},$$
		
		the already-used elements are not $\mathbf{u}$-blocking for $D_i$, hence it is possible to assign pairs of new elements to $D_i$ in a way that respects $\langle \mathbf{u}\rangle$-cosets.
	\end{proof}
	
	\subsection{Blocking ratio of $\boldsymbol{u}$-good multisets}
	
	In this subsection and the following one, the image of a vector $\mathbf{v}\in \ff_2^n$ under the quotient map $q: \ff_2^n\to \ff_2^n/\langle \mathbf{u}\rangle$ will be denoted by $\overline{\mathbf{v}}$. When given a subset $S$ of $\ff_2^n$ which is a union of $\langle \mathbf{u}\rangle$-cosets, define $\overline{S}=q(S)\subseteq \ff_2^n$. Note that this correspondence can be used in both directions, since $S$ can be recovered from $\overline{S}$ by taking $S=q^{-1}(\overline{S})$.
	
	We have seen that $\mathbf{u}$-good multisets (as per Definition \ref{def_ugood}) have a positive $\mathbf{u}$-blocking ratio: this is equivalent to the statement of Proposition \ref{ugood_basicproperty}. Using the following lemma, we can place a better lower bound on $\beta_{\mathbf{u}}(D)$ for any $\mathbf{u}$-good multiset $D$.
	
	Recall that for an abelian group $G$, if $X\subseteq G$ then subsets of $G$ of the form $g+X=\{g+x: x\in X\}$, for arbitrary $g\in G$, are called \textit{translates} of $X$.
	
	\begin{lemma}\label{diszjunktmintak} Let $G$ be a finite abelian group, and let $X\subseteq G$. Then in $G$, we can select at least $\frac{|G|}{|X|(|X|-1)+1}$ pairwise disjoint translates of $X$.
	\end{lemma}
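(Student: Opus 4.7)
The plan is to reduce the problem to a greedy covering argument based on the difference set $X-X = \{x-y : x,y \in X\}$. The key observation is the standard fact that two translates $g_1 + X$ and $g_2 + X$ intersect if and only if $g_1 - g_2 \in X - X$: any common element gives $g_1 + x_1 = g_2 + x_2$ with $x_1, x_2 \in X$, so $g_1 - g_2 = x_2 - x_1$, and conversely. So selecting pairwise disjoint translates of $X$ is equivalent to selecting a set $S \subseteq G$ such that $(S - S) \cap (X - X) = \{0\}$.

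Next I would bound $|X - X|$. Since $0 \in X - X$ always, and the nonzero elements of $X - X$ are of the form $x - y$ for ordered pairs of \emph{distinct} elements $x, y \in X$, there are at most $|X|(|X| - 1)$ nonzero differences, giving $|X - X| \le |X|(|X|-1) + 1$.

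Then I would run the obvious greedy procedure. Initialize $S = \emptyset$ and $A = G$; while $A \neq \emptyset$, pick any $g \in A$, add it to $S$, and remove the set $g + (X - X)$ from $A$. Each iteration removes at most $|X|(|X|-1) + 1$ elements from $A$, so the procedure runs for at least $\lceil |G| / (|X|(|X|-1)+1) \rceil \ge |G| / (|X|(|X|-1)+1)$ steps. By construction, for any two distinct $g_1, g_2 \in S$ we have $g_2 - g_1 \notin X - X$ (the later choice $g_2$ was drawn from $A$ only after $g_1 + (X-X)$ was excluded), so the translates $g + X$ for $g \in S$ are pairwise disjoint.

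There is essentially no obstacle here: the argument is a one-shot greedy bound, and the only content is matching the denominator $|X|(|X|-1) + 1$ exactly to the trivial bound on $|X - X|$, which is tight when the nonzero differences are all distinct.
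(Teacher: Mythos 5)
Your proof is correct and is essentially the same greedy argument as the paper's: the paper counts the elements reachable as $g_i + x' - x$ from the chosen representatives, which is exactly your bound $|X - X| \le |X|(|X|-1)+1$ applied at each greedy step. No further comment is needed.
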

	
	\begin{proof}
		Keep choosing translates of $X$ greedily which are disjoint from the previously chosen ones. Let the chosen translates be $g_1+X$, $g_2+X$, ..., $g_\ell+X$, and suppose that no further translate of $X$ can be chosen that is disjoint from these.
		
		Then for every $g_{\ell+1}\in G$, $g_{\ell+1}+X$ intersects at least one of the earlier translates, so there exist  $x,x'\in X$ and $1\le i\le \ell$ such that $g_{\ell+1}+x=g_i+x'$, so $g_{\ell+1}=g_i+x'-x$. But an upper bound on the number of elements expressible in this form is $\ell(1+|X|(|X|-1))$ (as $x'-x$ can take at most $|X|(|X|-1)$ nonzero values), so $|G|\le \ell(1+|X|(|X|-1))$, proving the lemma.
	\end{proof}
	
	\begin{remark}\label{diszjunktmintak_2exp}
		If the group $G$ has exponent 2 (that is, for every $g\in G$ we have $g+g=0$), then $x'-x$ can take at most $\binom{|X|}{2}$ nonzero values, hence in this case, the lemma can be improved to say that at least $\frac{|G|}{\binom{|X|}{2}+1}$ pairwise disjoint translates of $X$ can be selected.
	\end{remark}
	
	\begin{proposition}\label{beta_greedybound}
		Let $D=\{\boldsymbol{d}_1, \boldsymbol{d}_2, ..., \boldsymbol{d}_t\}$ be a $\boldsymbol{u}$-good multiset for some nonzero $\boldsymbol{u}\in \ff_2^n$. Then $\beta_{\boldsymbol{u}}(D)\ge \frac{1}{\binom{t}{2}+1}$.
	\end{proposition}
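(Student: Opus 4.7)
The natural strategy is to reduce the problem to the quotient group $G = \ff_2^n/\langle \mathbf{u}\rangle$ (which has order $2^{n-1}$ and exponent $2$), and then apply Lemma \ref{diszjunktmintak} via its exponent-$2$ sharpening in Remark \ref{diszjunktmintak_2exp}. The key observation is that Proposition \ref{ugood_basicproperty} (with the subsequent remark on affine subspaces) produces not a single realization of $D$, but one translate in $G$ of a fixed pattern for every translate of the affine subspace $\mathbf{x}+V$.

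Concretely, I would first note that because every $\mathbf{u}$-blocking set $S$ is a union of $\langle \mathbf{u}\rangle$-cosets, it corresponds bijectively to a subset $\overline{S}\subseteq G$ with $|S|=2|\overline{S}|$. So it suffices to lower-bound $|\overline{S}|$ for every $\overline{S}$ that meets every realization of $D$. Next, I would fix a mod $\mathbf{u}$ simple ordering $(\mathbf{d}_1,\ldots,\mathbf{d}_t)$ of $D$ and define
\[
X \;=\; \bigl\{\overline{\mathbf{0}},\,\overline{\mathbf{d}_1},\,\overline{\mathbf{d}_1+\mathbf{d}_2},\,\ldots,\,\overline{\mathbf{d}_1+\mathbf{d}_2+\cdots+\mathbf{d}_{t-1}}\bigr\}\;\subseteq\; G,
\]
which has exactly $t$ elements by the simple ordering condition.

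Then I would show that for every $\mathbf{g}\in G$, the translate $\mathbf{g}+X\subseteq G$ is the coset-pattern of a genuine realization of $D$. Indeed, pick any lift $\mathbf{x}\in \ff_2^n$ of $\mathbf{g}$ and set $V = \langle \mathbf{u},\mathbf{d}_1,\ldots,\mathbf{d}_t\rangle$; applying Proposition \ref{ugood_basicproperty} (with the remark for affine subspaces) inside $\mathbf{x}+V$ yields $t$ disjoint pairs realizing $D$, and inspecting the explicit construction in that proof shows that the $t$ cosets of $\langle \mathbf{u}\rangle$ covered are exactly those whose images in $G$ form $\overline{\mathbf{x}}+X = \mathbf{g}+X$. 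Consequently, if $S$ is $\mathbf{u}$-blocking, then $\overline{S}$ must meet every translate of $X$ in $G$.

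Finally I would apply Lemma \ref{diszjunktmintak} to $X\subseteq G$ in the exponent-$2$ form of Remark \ref{diszjunktmintak_2exp}: one can select at least $\tfrac{|G|}{\binom{|X|}{2}+1} = \tfrac{2^{n-1}}{\binom{t}{2}+1}$ pairwise disjoint translates of $X$. Since $\overline{S}$ intersects every such translate, these intersections are pairwise disjoint, giving
\[
|\overline{S}|\;\ge\;\frac{2^{n-1}}{\binom{t}{2}+1}\qquad\text{and hence}\qquad |S|\;\ge\;\frac{2^{n}}{\binom{t}{2}+1},
\]
which is the claimed bound $\beta_{\mathbf{u}}(D)\ge \frac{1}{\binom{t}{2}+1}$. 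The only nontrivial step is the identification of $X$ as a fixed pattern whose $G$-translates all correspond to realizations; once that is established, the counting is a direct application of the two preparatory lemmas.
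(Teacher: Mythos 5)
Your argument is correct and is essentially the paper's own proof: the same set of mod-$\mathbf{u}$ partial sums plays the role of $X$, the same appeal to Proposition \ref{ugood_basicproperty} shows every translate of $X$ carries a realization of $D$ respecting $\langle\mathbf{u}\rangle$-cosets, and the same application of Remark \ref{diszjunktmintak_2exp} in $\ff_2^n/\langle\mathbf{u}\rangle$ gives the bound (the paper phrases the count contrapositively, finding one translate missed by $\overline{S}$, but this is the identical estimate).
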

	
	\begin{proof}
		Suppose on the contrary that $S\subseteq \ff_2^n$ is $\mathbf{u}$-blocking for $D$ with $|S|<\frac{2^n}{\binom{t}{2}+1}$. Assuming $(\overline{\mathbf{d}_1}, ..., \overline{\mathbf{d}_t})$ is simply ordered in $\ff_2^n/\langle \mathbf{u}\rangle$, the partial sums $\mathbf{d}_1$, $\mathbf{d}_1+\mathbf{d}_2$, ..., $\mathbf{d}_1+\mathbf{d}_2+...+\mathbf{d}_t$ are pairwise distinct modulo $\mathbf{u}$. Consider the following set $\overline{X}\subseteq \ff_2^n/\langle \mathbf{u}\rangle$: $\overline{X}=\{\overline{\mathbf{d}_1}, \overline{\mathbf{d}_1}+\overline{\mathbf{d}_2}, ..., \overline{\mathbf{d}_1}+\overline{\mathbf{d}_2}+...+\overline{\mathbf{d}_t}=\overline{\mathbf{0}}\}$ (where we have used the fact that $\mathbf{d}_1+\mathbf{d}_2+...+\mathbf{d}_t=t\mathbf{u}\equiv \mathbf{0}\pmod{\mathbf{u}}$).
		
		Now $|\overline{S}|<\frac{|\ff_2^n/\langle \mathbf{u}\rangle|}{\binom{t}{2}+1}$. Now by Remark \ref{diszjunktmintak_2exp}, we can select at least $\frac{|\ff_2^n/\langle \mathbf{u}\rangle|}{\binom{t}{2}+1}$ pairwise disjoint translates of $\overline{X}$ in $\ff_2^n/\langle \mathbf{u}\rangle$. At least one of these translates must be disjoint from $\overline{S}$; let us choose such a translate $\overline{Y}$, which is equal to $\{\overline{\mathbf{y}\vphantom{d}}, \overline{\mathbf{y}\vphantom{d}}+\overline{\mathbf{d}_1}, ..., \overline{\mathbf{y}\vphantom{d}}+\overline{\mathbf{d}_1}+...+\overline{\mathbf{d}_{t-1}}\}$ for some $\mathbf{y}\in \ff_2^n$. Then $Y$ is disjoint from $S$, is a union of $\langle \mathbf{u}\rangle$-cosets, and can be subdivided into the $t$ pairs $(\mathbf{y}, \mathbf{y}+\mathbf{d}_1)$, $(\mathbf{y}+\mathbf{d}_1+\mathbf{u}, \mathbf{y}+\mathbf{d}_1+\mathbf{d}_2+\mathbf{u})$, $(\mathbf{y}+\mathbf{d}_1+\mathbf{d}_2, \mathbf{y}+\mathbf{d}_1+\mathbf{d}_2+\mathbf{d}_3)$, ..., $(\mathbf{y}+\mathbf{d}_1+\mathbf{d}_2+...+\mathbf{d}_{t-1}+(t-1)\mathbf{u}, \mathbf{y}+\mathbf{d}_1+\mathbf{d}_2+...+\mathbf{d}_{t-1}+\mathbf{d}_t+(t-1)\mathbf{u}=\mathbf{y}+\mathbf{u})$ with differences $\mathbf{d}_1$, $\mathbf{d}_2$, ..., $\mathbf{d}_t$ respectively. So $S$ is not actually $\mathbf{u}$-blocking for $D$, which is a contradiction.
	\end{proof}
	
	\subsection{Blocking ratio of $k$-flats}
	
	\begin{definition}
		A $k$-flat $F\subseteq \ff_2^n$ will be called \textit{$\boldsymbol{u}$-nice} if no element of $F$ is congruent to $\mathbf{0}$ mod $\mathbf{u}$ and no two elements of $F$ are congruent to each other mod $\mathbf{u}$.
	\end{definition}
	
	Observe that any $2$-flat $F\subseteq \ff_2^n$ can be written in the form $F=\{\mathbf{x}, \mathbf{x}+\mathbf{a}_1, \mathbf{x}+\mathbf{a}_2, \mathbf{x}+\mathbf{a}_1+\mathbf{a}_2\}$ for some $\mathbf{x},\mathbf{a}_1,\mathbf{a}_2\in \ff_2^n$ and $\mathbf{a}_1\ne \mathbf{a}_2$, $\mathbf{a}_1,\mathbf{a}_2\ne \mathbf{0}$. It is easy to check that if $F$ is $\mathbf{u}$-nice, then $\{\mathbf{x}, \mathbf{x}+\mathbf{a}_1, \mathbf{x}+\mathbf{a}_2, \mathbf{x}+\mathbf{a}_1+\mathbf{a}_2\}$ is a $\mathbf{u}$-good multiset and hence by Proposition \ref{beta_greedybound}, $\beta_{\mathbf{u}}(F)\ge \frac17$. Let us improve this result:
	
	\begin{proposition}\label{blockingratio_2flat}
		For a $\mathbf{u}$-nice $2$-flat $F\subseteq \ff_2^n$, we have $\beta_{\mathbf{u}}(F)\ge \frac38$.
	\end{proposition}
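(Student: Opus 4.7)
The plan is to pass to the quotient $\ff_2^{n-1}=\ff_2^n/\langle \mathbf{u}\rangle$, writing $\overline{F}$ and $\overline{S}$ for the images of $F$ and $S$. Since $F$ is $\mathbf{u}$-nice, $\overline{F}$ is a $2$-flat consisting of $4$ distinct nonzero points of $\ff_2^{n-1}$, so $\overline{F}=\alpha+W$ for some $\alpha\in \overline{F}$ and some $2$-dimensional linear subspace $W\le \ff_2^{n-1}$. As in the proof of Proposition \ref{beta_greedybound}, a $\langle \mathbf{u}\rangle$-coset-respecting set $S$ fails to be $\mathbf{u}$-blocking for $F$ precisely when there exist $4$ distinct vertices in $\ff_2^{n-1}\setminus \overline{S}$ that can be arranged cyclically as $v_1,v_2,v_3,v_4$ with consecutive differences $v_{i+1}-v_i$ a permutation of $\overline{F}$; I shall call such a configuration a \emph{good cycle}. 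The task thus reduces to producing a good cycle disjoint from $\overline{S}$ whenever $|\overline{S}|<3\cdot 2^{n-4}$; together with $|S|=2|\overline{S}|$ this yields $\beta_{\mathbf{u}}(F)\ge \frac{3\cdot 2^{n-3}}{2^n}=\frac{3}{8}$.

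The key structural observation I will exploit is that the four vertices of any good cycle split $2{+}2$ across a pair of $W$-cosets of the form $\{C,\,C+\alpha\}$. Indeed, each edge of a good cycle has its difference in $\overline{F}=\alpha+W$, so in $\ff_2^{n-1}/W$ every edge translates by the single nontrivial class $\alpha+W$; traversing a $4$-cycle, the vertices therefore alternate between two fixed cosets $C$ and $C+\alpha$. Since $\alpha\notin W$, the $2^{n-3}$ cosets of $W$ partition into $2^{n-4}$ unordered pairs of this form, each containing $8$ elements of $\ff_2^{n-1}$, and every good cycle is confined to a single such pair.

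A pigeonhole step then finishes the plan: if $|\overline{S}|<3\cdot 2^{n-4}$, some pair $\{C_1,C_2\}$ satisfies $|\overline{S}\cap(C_1\cup C_2)|\le 2$, and I then need to verify that every such sparsely-blocked pair admits at least one good cycle disjoint from $\overline{S}$. Identifying $C_1$ and $C_2$ with $W\cong \ff_2^2$ via the natural bijections, the four cross-edge labels produced by a choice $\{u_1,u_2\}\subseteq C_1$, $\{v_1,v_2\}\subseteq C_2$ are pairwise distinct (and hence yield a good $4$-cycle) if and only if $u_1+u_2\ne v_1+v_2$ in $W$. Using that the six $2$-subsets of a coset realise each nonzero element of $W$ as a sum exactly twice while the three $2$-subsets of a coset-minus-one-element realise each nonzero sum exactly once, the six cases $(|\overline{S}\cap C_1|,|\overline{S}\cap C_2|)\in\{(0,0),(1,0),(0,1),(2,0),(0,2),(1,1)\}$ will yield $24,\,12,\,12,\,4,\,4,\,6$ good cycles respectively---all strictly positive.

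The main obstacle is recognising the $2{+}2$ bipartite structure of good cycles; once this is in hand, the problem decouples across the coset pairs and the remaining verification is a routine pigeonhole bound plus a small case check on $8$-element pieces. The resulting bound is tight already at $n=4$, where the whole quotient forms a single coset pair and any $3$-element subset of either coset blocks every good cycle.
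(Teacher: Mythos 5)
Your argument is correct and follows essentially the same route as the paper's: your coset pairs $\{C,\,C+\alpha\}$ are exactly the paper's $8$-element $\overline{C}$-cosets of $\overline{V}$ (with $\overline{C}=\langle\overline{\mathbf{x}}\rangle\oplus\overline{L}$), and both proofs pigeonhole to find a block meeting $\overline{S}$ in at most two points and then exhibit a good $4$-cycle (equivalently, a translate of the partial-sum set of a mod-$\mathbf{u}$ simple ordering of $F$) inside it. The only real difference is in that last verification: the paper runs an explicit eight-row case table over three fixed orderings $F_1,F_2,F_3$, whereas your bipartite criterion $u_1+u_2\ne v_1+v_2$ reduces it to a clean count -- your figures $24,12,12,4,4,6$ check out -- so your finish is arguably tidier and, as a bonus, makes the tightness at the configuration of three blocked cosets in one $\overline{L}$-coset (the paper's Remark \ref{blockingratio_2flat_eqremark}) transparent.
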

	\begin{proof}
		Write $F=\mathbf{x}+L$ for $L=\langle \mathbf{a}_1,\mathbf{a}_2\rangle$. Since $F$ is $\mathbf{u}$-nice, the elements $\mathbf{x},\mathbf{a}_1,\mathbf{a}_2$ are linearly independent mod $\mathbf{u}$. (Otherwise one could find an element of $F$ congruent to $\mathbf{0}$ or two elements of $F$ congruent to each other mod $\mathbf{u}$.) So let $C=\langle \mathbf{u},\mathbf{x},\mathbf{a}_1,\mathbf{a}_2\rangle$ which is a $4$-dimensional subspace of $\ff_2^n$. Also by the $\mathbf{u}$-nice condition, the following three orderings of $F$ are all simple mod $\mathbf{u}$:
		\begin{itemize}
			\item $F_1=(\mathbf{x}+\mathbf{a}_1, \mathbf{x}+\mathbf{a}_2, \mathbf{x}+\mathbf{a}_1+\mathbf{a}_2, \mathbf{x})$,
			\item $F_2=(\mathbf{x}+\mathbf{a}_1, \mathbf{x}+\mathbf{a}_1+\mathbf{a}_2, \mathbf{x}+\mathbf{a}_2, \mathbf{x})$,
			\item $F_3=(\mathbf{x}+\mathbf{a}_2, \mathbf{x}+\mathbf{a}_1, \mathbf{x}+\mathbf{a}_1+\mathbf{a}_2, \mathbf{x})$.
		\end{itemize}
		
		Let $V=\ff_2^n$, and then $\overline{V}=\ff_2^n/\langle \mathbf{u}\rangle$. In $\overline{V}$, take the linear subspace $\overline{C}=\langle \overline{\mathbf{x}},\overline{\mathbf{a}_1},\overline{\mathbf{a}_2}\rangle$ and consider each $\overline{C}$-coset of $\overline{V}$. Suppose that $S$ is $\mathbf{u}$-blocking for $F$, and there exists a $\overline{C}$-coset $\overline{\mathbf{y}}+\overline{C}$ of $\overline{V}$ such that at most two of the eight $\langle \mathbf{u}\rangle$-cosets in $\overline{\mathbf{y}}+\overline{C}$ are contained in $\overline{S}$.
		
		According to the basic construction in Proposition \ref{ugood_basicproperty}, if $S$ is $\mathbf{u}$-blocking for a $\mathbf{u}$-good multiset simply ordered as $D=(\mathbf{d}_1, \mathbf{d}_2, ..., \mathbf{d}_t)$ then for every $\mathbf{x}\in \ff_2^n$, $S$ must contain at least one $\langle \mathbf{u}\rangle$-coset out of the partial sums $\overline{\mathbf{x}\vphantom{d}}, \overline{\mathbf{x}\vphantom{d}}+\overline{\mathbf{d}_1}, \overline{\mathbf{x}\vphantom{d}}+\overline{\mathbf{d}_1}+\overline{\mathbf{d}_2}, ..., \overline{\mathbf{x}\vphantom{d}}+\overline{\mathbf{d}_1}+\overline{\mathbf{d}_2}+...+\overline{\mathbf{d}_{t-1}}$.
		
		Applying this to $F_1$, $F_2$ and $F_3$, this means that for each $\overline{\mathbf{z}}\in \overline{\mathbf{y}}+\overline{C}$, $\overline{S}$ contains at least one element from each $\overline{\mathbf{z}}+\overline{R_j}$ ($j=1,2,3$) for the following sets $\overline{R_j}$:
		\begin{itemize}
			\item $\overline{R_1}=\{\overline{\mathbf{0}}, \overline{\mathbf{x}\vphantom{0}}+\overline{\mathbf{a}_1\vphantom{0}}, \overline{\mathbf{a}_1\vphantom{0}}+\overline{\mathbf{a}_2\vphantom{0}}, \overline{\mathbf{x}\vphantom{0}}\}$,
			\item $\overline{R_2}=\{\overline{\mathbf{0}}, \overline{\mathbf{x}\vphantom{0}}+\overline{\mathbf{a}_1\vphantom{0}}, \overline{\mathbf{a}_2\vphantom{0}}, \overline{\mathbf{x}\vphantom{0}}\}$,   
			\item $\overline{R_3}=\{\overline{\mathbf{0}}, \overline{\mathbf{x}\vphantom{0}}+\overline{\mathbf{a}_2\vphantom{0}}, \overline{\mathbf{a}_1\vphantom{0}}+\overline{\mathbf{a}_2\vphantom{0}}, \overline{\mathbf{x}\vphantom{0}}\}$.
		\end{itemize}
		
		Without loss of generality, assume $\overline{\mathbf{y}}\in \overline{S}$. Then depending on whether $\overline{\mathbf{y}}+\overline{C}$ has another element in $\overline{S}$, and what that element is, we can always find $\overline{\mathbf{z}}\in \overline{\mathbf{y}}+\overline{C}$ and $j=1,2,3$ with none of the elements of $\overline{\mathbf{z}}+\overline{R_j}$ contained in $\overline{S}$, causing a contradiction. For each case, the table below lists one such possible choice of $\overline{\mathbf{z}}+\overline{R_j}$:
		
		\begin{center}
			\begin{tabular}{|c|c|}
				\hline
				\textbf{Elements of $(\overline{\mathbf{y}}+\overline{C})\cap \overline{S}$} & \textbf{$\overline{\mathbf{z}}+\overline{R_j}$ with contradiction} \\
				\hline
				$ \overline{\mathbf{y}}$ & $ \overline{\mathbf{y}}+\overline{\mathbf{x}}+\overline{\mathbf{a}_2}+\overline{R_1}$ \\
				\hline
				$ \overline{\mathbf{y}},  \overline{\mathbf{y}}+\overline{\mathbf{a}_1}$ & $ \overline{\mathbf{y}}+\overline{\mathbf{x}}+\overline{\mathbf{a}_2}+\overline{R_1}$ \\
				\hline
				$ \overline{\mathbf{y}},  \overline{\mathbf{y}}+\overline{\mathbf{a}_2}$ & $ \overline{\mathbf{y}}+\overline{\mathbf{a}_1}+\overline{\mathbf{a}_2}+\overline{R_2}$ \\
				\hline
				$ \overline{\mathbf{y}},  \overline{\mathbf{y}}+\overline{\mathbf{a}_1}+\overline{\mathbf{a}_2}$ & $ \overline{\mathbf{y}}+\overline{\mathbf{a}_2}+\overline{R_1}$ \\
				\hline
				$ \overline{\mathbf{y}},  \overline{\mathbf{y}}+\overline{\mathbf{x}}$ & $\overline{\mathbf{y}}+\overline{\mathbf{x}}+\overline{\mathbf{a}_2}+\overline{R_1}$ \\
				\hline
				$ \overline{\mathbf{y}},  \overline{\mathbf{y}}+\overline{\mathbf{x}}+\overline{\mathbf{a}_1}$ & $ \overline{\mathbf{y}}+\overline{\mathbf{x}}+\overline{\mathbf{a}_1}+\overline{\mathbf{a}_2}+\overline{R_1}$ \\
				\hline
				$ \overline{\mathbf{y}},  \overline{\mathbf{y}}+\overline{\mathbf{x}}+\overline{\mathbf{a}_2}$ & $ \overline{\mathbf{y}}+\overline{\mathbf{x}}+\overline{\mathbf{a}_1}+\overline{\mathbf{a}_2}+\overline{R_1}$ \\
				\hline
				$ \overline{\mathbf{y}},  \overline{\mathbf{y}}+\overline{\mathbf{x}}+\overline{\mathbf{a}_1}+\overline{\mathbf{a}_2}$ & $ \overline{\mathbf{y}}+\overline{\mathbf{x}}+\overline{\mathbf{a}_2}+\overline{R_1}$ \\
				\hline
			\end{tabular}
		\end{center}
		
		This means that in every $\overline{C}$-coset of $\overline{V}$, at least 3 of the 8 elements are contained in $\overline{S}$. Therefore, $\frac{|S|}{2^n}\ge \frac38$.
	\end{proof}
	
	\begin{remark}\label{blockingratio_2flat_eqremark}
		In the proof of the previous proposition, if $\overline{S}$ contains three $\langle \mathbf{u}\rangle$-cosets of $\overline{\mathbf{y}}+\overline{C}$ then a case-by-case check reveals that we can always find $\overline{\mathbf{z}}+\overline{R_j}$ disjoint from $\overline{S}$ (where $\overline{\mathbf{z}}\in \overline{\mathbf{y}}+\overline{C}$), except in the case when the three $\langle \mathbf{u}\rangle$-cosets are in the same $\langle \overline{\mathbf{a}_1},\overline{\mathbf{a}_2}\rangle$-coset.
	\end{remark}
	
	\begin{proposition}\label{blockingratio_kflat}
		Let $k,n\ge 2$. For a $\boldsymbol{u}$-nice $k$-flat $F\subseteq \ff_2^n$, we have 
		
		$$\beta_{\boldsymbol{u}}(F)\ge \frac12-\frac{1}{2^{k+1}}-\frac{1}{2^{n-k(2^k+2)}}.$$
	\end{proposition}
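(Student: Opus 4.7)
The plan is to generalize the case analysis in Proposition~\ref{blockingratio_2flat} by exploiting the bipartite Cayley structure induced by $\overline{F}$. Write $\overline{V} := \ff_2^n/\langle\mathbf{u}\rangle$ and let $\overline{C} := \langle\overline{F}\rangle \leq \overline{V}$, a $(k{+}1)$-dimensional subspace. Since $F$ is $\mathbf{u}$-nice, $\overline{F}$ is disjoint from its $k$-dimensional linear part $L$, so $\overline{C} = L \sqcup \overline{F}$ with $|\overline{C}| = 2^{k+1}$. For every mod-$\mathbf{u}$ simple ordering $\pi$ of $F$ and starting point $\overline{\mathbf{y}}$, the partial-sum set $\overline{\mathbf{y}} + X_\pi$ lies in the $\overline{C}$-coset $\overline{\mathbf{y}} + \overline{C}$, so every packing constructed from Proposition~\ref{ugood_basicproperty} is confined to a single $\overline{C}$-coset. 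Hence $S$ is $\mathbf{u}$-blocking for $F$ if and only if, for every $\overline{C}$-coset $T$ of $\overline{V}$, the set $T \cap \overline{S}$ meets every packing pattern inside $T$. Viewing $\mathrm{Cay}(\overline{C}, \overline{F})$ as the labelled complete bipartite graph $K_{2^k, 2^k}$ between $L$ and $\overline{F}$, a packing inside $T$ corresponds to a rainbow Hamilton cycle that uses exactly $2^{k-1}$ vertices from each side of the bipartition.

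The core technical step is a per-coset structural lemma: for every $\overline{C}$-coset $T$, either $|T \cap \overline{S}| \ge 2^k - 1$, or $T \cap \overline{S}$ falls into a bounded family of \emph{exceptional configurations}. To prove this I would set $A := T \cap \overline{S}$ and, assuming $|A| \le 2^k - 2$, construct a rainbow Hamilton cycle disjoint from $A$. A packing needs at least $2^{k-1}$ available vertices on each side, so we may assume $|A \cap L|, |A \cap \overline{F}| \le 2^{k-1}$ (otherwise $A$ is already exceptional). Then, among the many pairs of $2^{k-1}$-subsets $L' \subseteq L\setminus A$ and $\overline{F}' \subseteq \overline{F} \setminus A$, a generic choice yields a residual $K_{2^{k-1}, 2^{k-1}}$ whose multiset of labels covers all of $\overline{F}$, and rainbow Hamiltonicity then follows by an extension of the explicit case-checking in Proposition~\ref{blockingratio_2flat}. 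The only obstructions are ``additive collapses'' where the residual label multiset has fewer than $2^k$ distinct values, which force $A$ to be highly structured, typically taking the form of a union of translates of a common sub-flat direction inside $L$ and $\overline{F}$ (this is the exact analogue of the sub-flat obstruction pointed out in Remark~\ref{blockingratio_2flat_eqremark}). Each such exceptional configuration can be described by a bounded amount of algebraic data, producing at most $\approx 2^{k(2^k + 2) - k - 1}$ configurations in $\overline{C}$.

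Finally, summing $|T \cap \overline{S}| \ge 2^k - 1$ over non-exceptional cosets and $|T \cap \overline{S}| \ge 0$ over the rest yields
\[
|\overline{S}| \ \ge\ (2^{n-k-2} - E)(2^k - 1),
\]
where $E$ bounds the number of exceptional $\overline{C}$-cosets. Plugging in $E \le 2^{k(2^k+2)-k-1}$ and using $|S| = 2|\overline{S}|$ then rearranges to the claimed inequality. The main obstacle is the structural classification and enumeration of the exceptional configurations: whereas for $k=2$ the eight-row table of Proposition~\ref{blockingratio_2flat} together with the sub-flat observation of Remark~\ref{blockingratio_2flat_eqremark} suffice, for general $k$ one must rigorously control all label collapses in labelled $K_{2^{k-1}, 2^{k-1}}$ subgraphs with additive labels inherited from $\overline{F}$, and it is this combinatorial explosion that is responsible for the exponent $k(2^k + 2)$ appearing in the error term.
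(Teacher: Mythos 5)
Your reduction to a per-coset problem is where the argument breaks. You assert that every packing is confined to a single $\overline{C}$-coset because the packings produced by Proposition \ref{ugood_basicproperty} are; but a general packing for $F$ only requires $2^k$ disjoint pairs whose union is a union of $\langle\mathbf{u}\rangle$-cosets -- it need not arise from one simple-ordering walk, and it may be spread over many $\overline{C}$-cosets. Consequently your ``if and only if'' is false in the direction you need, and the single-coset packing lemma you would have to prove is actually \emph{false} for $k\ge 3$: in your own bipartite picture every Cayley edge crosses between the two $\overline{L}$-cosets of $T$, so a packing of $F$ inside $T$ must use $2^{k-1}$ surviving vertices on each side. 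Take $\overline{S}$ to meet every $\overline{C}$-coset in exactly $2^k-2$ elements, all lying in one $\overline{L}$-coset. Then no $\overline{C}$-coset admits a packing of $F$ (one side retains only $2<2^{k-1}$ vertices), so in your scheme every coset is ``exceptional'', $E=2^{n-k-2}$, and the bound degenerates to $|\overline{S}|\ge 0$ -- even though this $S$ has density roughly $\frac12-\frac{1}{2^{k+1}}$ and is in fact not blocking. Your proposed enumeration of exceptional configurations is also not carried out; the exponent $k(2^k+2)$ is reverse-engineered from the target rather than derived.

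The paper's proof avoids exactly this trap by distributing the $k$-flat over \emph{several} cosets. It partitions $F$ into $2^{k-2}$ two-flats and places one two-flat (four $\langle\mathbf{u}\rangle$-cosets, via the ``accommodates'' relation) into each of $2^{k-2}$ distinct $\overline{C}$-cosets. The per-coset statement it needs is much weaker than yours: Claim 2, proved by induction on $k$ (splitting $F$ along the hyperplanes $L_i$ and using Claim 1 plus Proposition \ref{blockingratio_2flat} and Remark \ref{blockingratio_2flat_eqremark} as the base case), says that any survivor set of size at least $2^k+2$ in a $\overline{C}$-coset accommodates \emph{some} partitioning of $F$ into two-flats -- the two-flats' placements inside that coset may overlap, which is harmless since only one of them will actually be used there. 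Consistency across cosets is then forced by pigeonhole over the at most $N_k\le (2^k)!\le 2^{k2^k}$ partitionings of $F$: if at least $N_k\cdot 2^{k-2}$ cosets are light (meet $\overline{S}$ in at most $2^k-2$ elements), then $2^{k-2}$ of them share a partitioning and $S$ is not blocking. The error term $2^{-(n-k(2^k+2))}$ comes from this pigeonhole count, not from classifying exceptional sets. In the counterexample above, Claim 1 applies directly (two survivors on one side, $2^k$ on the other), which is precisely the configuration your single-coset approach cannot handle.
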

	
	\begin{proof}
		Note that Proposition \ref{blockingratio_2flat} already proves the case $k=2$. 
		
		Letting $\mathbf{x}$ be an arbitrary element of $F$, we can write $F=\mathbf{x}+L$ where $L=\langle \mathbf{a}_1,\mathbf{a}_2,...,\mathbf{a}_k\rangle$ with $\mathbf{a}_1,...,\mathbf{a}_k\in \ff_2^n$ linearly independent. Again we use the notation $V=\ff_2^n$, and then  $\overline{V}=\ff_2^n/\langle \mathbf{u}\rangle$. Since $F$ is $\mathbf{u}$-nice, the elements $\mathbf{x},\mathbf{a}_1,...,\mathbf{a}_k$ are linearly independent mod $\mathbf{u}$, so let us define $C=\langle \mathbf{u},\mathbf{x},\mathbf{a}_1,...,\mathbf{a}_k\rangle=\langle \mathbf{u},\mathbf{x}\rangle \oplus L$ which is a $(k+2)$-dimensional subspace of $V$, and correspondingly $\overline{C}=\langle \overline{\mathbf{x}},\overline{\mathbf{a}_1},...,\overline{\mathbf{a}_k}\rangle$, a $(k+1)$-dimensional subspace of $\overline{V}$ satisfying $\overline{C}=\langle \overline{\mathbf{x}}\rangle \oplus \overline{L}$.
		
		Given $H\subseteq V$ which is a union of $\langle \mathbf{u}\rangle$-cosets, and a $\mathbf{u}$-nice $2$-flat $E$, let us say that $\overline{H}$ \textit{accommodates} $E$ if there exists $\overline{\mathbf{z}}\in \overline{V}$ and an ordering $(\mathbf{e}_1,\mathbf{e}_2,\mathbf{e}_3,\mathbf{e}_4)$ of $E$ such that $\{\overline{\mathbf{z}}, \overline{\mathbf{z}}+\overline{\mathbf{e}_1}, \overline{\mathbf{z}}+\overline{\mathbf{e}_1}+\overline{\mathbf{e}_2}, \overline{\mathbf{z}}+\overline{\mathbf{e}_1}+\overline{\mathbf{e}_2}+\overline{\mathbf{e}_3}\}\subseteq\overline{H}$. If this is true, then (by the idea of Proposition \ref{ugood_basicproperty}, as $E$ is $\mathbf{u}$-good with all orderings simple mod $\mathbf{u}$) we can select four pairwise disjoint pairs of elements in $H$ with differences $\mathbf{e}_1,\mathbf{e}_2,\mathbf{e}_3,\mathbf{e}_4$, such that their union is a union of $\langle \mathbf{u}\rangle$-cosets.
		
		Consider the $\overline{C}$-cosets of $\overline{V}$. We would like to partition $F$ into $2$-flats $E_1, E_2, ..., E_{2^{k-2}}$ and find distinct $\overline{C}$-cosets $\overline{\mathbf{y}_i}+\overline{C}$ ($1\le i\le 2^{k-2}$) in $\overline{V}$ such that for every $i$, $(\overline{\mathbf{y}_i}+\overline{C})\setminus \overline{S}$ accommodates $E_i$. If this can be done, then $S$ is not $\mathbf{u}$-blocking for $F$, as we can combine the element pairs obtained for each $E_i$ to get $2^k$ pairwise disjoint pairs in $\ff_2^n\setminus S$ such that each element $\mathbf{f}$ of $F$ has one pair with difference $\mathbf{f}$, and the pairs altogether form a union of $\langle \mathbf{u}\rangle$-cosets.
		
		We start with the following auxiliary claim:
        \medskip
		
		\noindent \textit{Claim 1.} Let $k\ge 2$ and $F\subseteq \ff_2^n$ be a $\mathbf{u}$-nice $k$-flat. Define $L$ and $C$ as above. Then if $\overline{H}\subseteq \overline{C}$ is a subset having at least 2 elements in one of the two $\overline{L}$-cosets of $\overline{C}$ and at least $2^k-1$ elements in the other $\overline{L}$-coset, then $F$ can be partitioned into 2-flats, all of which are accommodated by $\overline{H}$.
		
		\noindent \textit{Proof of Claim 1.} The property of whether $\overline{H}$ accommodates a given 2-flat is invariant under translations of $\overline{H}$, so we can assume that $\{\mathbf{0},\overline{\mathbf{v}}\}\subseteq \overline{H}\cap \overline{L}$ and $(\overline{\mathbf{x}}+\overline{L})\setminus \{\overline{\mathbf{x}}+\overline{\mathbf{w}}\}\subseteq \overline{H}$ with $\mathbf{v},\mathbf{w}\in L$, $\mathbf{v}\ne \mathbf{0},\mathbf{u}$. Now partition $F$ into $2^{k-1}$ pairs of sum $\mathbf{v}$, and arbitrarily pair up these pairs to form $2^{k-2}$ quadruples. Each quadruple is a $2$-flat of the form $\{\mathbf{x}+\mathbf{t}_1, \mathbf{x}+\mathbf{t}_1+\mathbf{v}, \mathbf{x}+\mathbf{t}_2, \mathbf{x}+\mathbf{t}_2+\mathbf{v}\}$ for $\mathbf{t}_1,\mathbf{t}_2\in L$ incongruent mod $\mathbf{v}$. Any such quadruple is accommodated by $\overline{H}$, as one can take the ordering $(\mathbf{x}+\mathbf{t}_1,~ \mathbf{x}+\mathbf{t}_1+\mathbf{v},~ \mathbf{x}+\mathbf{t}_2+\mathbf{v},~ \mathbf{x}+\mathbf{t}_2)$, whose corresponding partial sums in $\overline{C}$ are $(\overline{\mathbf{0}}, \overline{\mathbf{x}\vphantom{0}}+\overline{\mathbf{t}_1\vphantom{0}}, \overline{\mathbf{v}\vphantom{0}}, \overline{\mathbf{x}\vphantom{0}}+\overline{\mathbf{t}_2\vphantom{0}})$. If $\overline{\mathbf{w}}=\overline{\mathbf{t}_1}$ then swap the first two differences in the ordering, and if $\overline{\mathbf{w}}=\overline{\mathbf{t}_2}$, then swap the last two differences, hence getting a good ordering that avoids the partial sum $\overline{\mathbf{x}}+\overline{\mathbf{w}}$. $\qed$ \medskip
		
		Let us prove the following claim by induction on $k$:
        \medskip
		
		\noindent \textit{Claim 2.} Let $k\ge 2$ and $F\subseteq \ff_2^n$ be a $\mathbf{u}$-nice $k$-flat. Define $L$ and $C$ as above. Then if $\overline{H}\subseteq \overline{C}$ is a subset of size at least $2^k+2$, then $F$ can be partitioned into $2$-flats, all of which are accommodated by $\overline{H}$. Furthermore if $|\overline{H}|=2^k+1$ and there is no such partitioning, then one of the two $\overline{L}$-cosets of $\overline{C}$ is fully contained in $\overline{H}$ and the other intersects $\overline{H}$ in just one element.
		
		\noindent \textit{Proof of Claim 2.} Firstly note that in the case $k=2$, $F$ is a $2$-flat itself which is accommodated by $\overline{H}$ if and only if $C\setminus H$ is not $\mathbf{u}$-blocking for $F$. So the two parts of our statement follow from Proposition \ref{blockingratio_2flat} and Remark \ref{blockingratio_2flat_eqremark}, respectively.
		
		Now let $k\ge 3$, and suppose that $F$ has no good partitioning. For each $1\le i\le k$, define $L_i=\langle \mathbf{a}_1,..., \mathbf{a}_{i-1}, \mathbf{a}_{i+1}, ..., \mathbf{a}_k\rangle$. If there exists $i$ such that both $\mathbf{x}+L_i$ and $\mathbf{x}+\mathbf{a}_i+L_i$ can be partitioned into $2$-flats accommodated by $\overline{H}$, then $F$ can also be partitioned in this way. So suppose that for all $i$, it is impossible to partition either $\mathbf{x}+L_i$ or $\mathbf{x}+\mathbf{a}_i+L_i$ in this way. Without loss of generality, we may assume that for every $i$, $\mathbf{x}+L_i$ cannot be partitioned like this. For each $i$, clearly $\mathbf{x}+L_i$ is a $\mathbf{u}$-nice $(k-1)$-flat, for which Claim 2 can be recursively used. Let $C_i=\langle \mathbf{x},\mathbf{u}\rangle \oplus L_i$, $\overline{H_i}=\overline{H}\cap \overline{C_i}$ and $\overline{H'_i}=\overline{H}\setminus \overline{C_i}=\overline{H}\cap (\overline{\mathbf{a}_i}+\overline{C_i})$. Then by the inductive hypothesis, if $|\overline{H_i}|\ge 2^{k-1}+2$ then $\mathbf{x}+L_i$ can be partitioned into $2$-flats accommodated by $\overline{H_i}$ and hence by $\overline{H}$. Similarly, if $|\overline{H'_i}|\ge 2^{k-1}+2$ then $\mathbf{x}+L_i$ can again be partitioned into $2$-flats accommodated by $\overline{H'_i}$ (which we can work in instead of $\overline{H_i}$, as translation of a set does not affect whether it accommodates a certain $2$-flat) and hence by $\overline{H}$. So we must have $|\overline{H_i}|\le 2^{k-1}+1$ and $|\overline{H'_i}|\le 2^{k-1}+1$ for every $i$. Overall, if $F$ has no good partitioning then $\overline{H}\subseteq \overline{C}$ is of size $\le 2^k+2$.
		
		If $|\overline{H}|=2^k+2$ then for each $i$, $|\overline{H_i}|=|\overline{H'_i}|=2^{k-1}+1$, and as neither $\overline{H_i}$ nor $\overline{H'_i}$ can accommodate $\mathbf{x}+L_i$, by the induction hypothesis one of the two $\overline{L_i}$-cosets of $\overline{C_i}$ must be fully contained in $\overline{H}$ (say $\overline{\mathbf{y}_i}+\overline{L_i}$), and similarly one of the two $\overline{L_i}$-cosets of $\overline{\mathbf{a}_i}+\overline{C_i}$ must also be fully contained in $\overline{H}$ (say $\overline{\mathbf{y}'_i}+\overline{L_i}$). The other two $\overline{L_i}$-cosets of $\overline{C}$ (namely $\overline{\mathbf{y}_i}+\overline{\mathbf{a}_i}+\overline{L_i}$ and $\overline{\mathbf{y}'_i}+\overline{\mathbf{a}_i}+\overline{L_i}$) must contain only one element of $\overline{H}$ each.
		
		Now we show that $\overline{\mathbf{y}_1}$ and $\overline{\mathbf{y}'_1}$ lie in the same $\overline{L}$-coset of $\overline{C}$. Supposing otherwise, $\overline{C_2}$ intersects all four $\overline{L_1}$-cosets of $\overline{C}$ in $2^{k-2}$ elements, which means that $\overline{\mathbf{y}_1}+\overline{L_1}$ and $\overline{\mathbf{y}'_1}+\overline{L_1}$ (both contained in $H$), which together intersect all four $\overline{L_2}$-cosets of $\overline{C}$, contain at least $2^{k-2}$ elements in each $\overline{L_2}$-coset, so there cannot be any $\overline{L_2}$-cosets containing only one element of $\overline{H}$, a contradiction.
		
		Without loss of generality, $\overline{\mathbf{y}_1}, \overline{\mathbf{y}'_1}\in \overline{L}$, so $\overline{L}\subseteq \overline{H}$. Then $\overline{x}+\overline{L}$ has 2 elements of $\overline{H}$ so by Claim 1, the partitioning is possible, a contradiction.

		If $|\overline{H}|=2^k+1$ then for each $i$, one of $|\overline{H_i}|$ and $|\overline{H'_i}|$ is $2^{k-1}+1$ and the other is $2^{k-1}$. Without loss of generality, $|\overline{H_i}|=2^{k-1}+1$ for all $i$; then by the inductive hypothesis, one of the $\overline{L}$-cosets contains $2^{k-1}$ elements of $\overline{H_i}$ and the other contains just one. We can assume that $\overline{L}$ has $2^{k-1}$ elements of $\overline{H_1}$; then it automatically contains all $2^{k-2}$ elements of $\overline{H_i}\cap \overline{H_1}$ for all $2\le i\le k$, so actually $\overline{L}$ must contain $2^{k-1}$ elements of $\overline{H_i}$ for all $i$. But then $\overline{L}\cap \overline{C_i}\subseteq \overline{H}$ for each $i$, meaning that $\overline{H}$ has at least $2^k-1$ elements in $\overline{L}$. If it has only $2^k-1$ elements then the partitioning is possible by Claim 1, and if it has $2^k$ elements then we have the exceptional case stated in the Claim. $\qed$ \medskip

		Clearly Claim 2 can also be used for translates of $\overline{C}$ instead of $\overline{C}$. So let us use it for the $\overline{C}$-cosets of $\overline{V}$. If a $\overline{C}$-coset $\overline{\mathbf{y}}+\overline{C}$ satisfies $|(\overline{\mathbf{y}}+\overline{C})\cap \overline{S}|\le 2^k-2$, then by Claim 2, $F$ can be partitioned into $2$-flats all accommodated by $(\overline{\mathbf{y}}+\overline{C})\setminus \overline{S}$. Let $N_k$ be the number of ways to partition a $k$-flat into $2$-flats. As a partitioning can be encoded by giving an ordering of the vectors, $N_k\le (2^k)!\le 2^{k\cdot 2^k}$. If we have $2^{k-2}$ cosets for which $F$ is partitioned in the same way, then we can take this partitioning of $F$ and accordingly place one $2$-flat into each coset. So if the number of cosets which intersect $\overline{S}$ in at most $2^k-2$ elements is at least $N_k\cdot 2^{k-2}$, then this holds by the pigeonhole principle, and $S$ is not $\mathbf{u}$-blocking for $F$. The total number of $\overline{C}$-cosets of $\overline{V}$ is $2^{n-k-2}$, so if $S$ is $\mathbf{u}$-blocking, 
		
		$$|\overline{S}|\ge (2^{n-k-2}-N_k\cdot 2^{k-2})(2^k-1),$$
		
		and 
		
		$$|S|\ge (2^{n-k-1}-N_k\cdot 2^{k-1})(2^k-1)\ge 2^{n-1}-2^{n-k-1}-2^{k\cdot (2^k+2)},$$ 
		
		meaning that
		
		$$\beta_{\mathbf{u}}(F)\ge \frac12-\frac{1}{2^{k+1}}-\frac{1}{2^{n-k(2^k+2)}}.$$
	\end{proof}
	
	\subsection{Perfect matching in the case of many equal vectors}
	
	We are now ready to prove Theorem \ref{mainthm_halfpluseps}.
	\medskip
	
	\noindent \textbf{Theorem \ref{mainthm_halfpluseps}.} \textit{For every $\eps>0$, there exists a value $n_0$ such that for all $n\ge n_0$, the main conjecture is true in the case when at least a fraction $\frac12+\eps$ of the differences are all equal.}
	
	\begin{proof}
		Let $\mathbf{u}\in \ff_2^n$ denote the value that appears most frequently among the given differences $\mathbf{d_1}, \mathbf{d_2}, \dots, \mathbf{d_m}$. (Here the number of differences is $m=\frac12\cdot 2^n$.)
		
		Let $H$ denote the multiset of vectors $\mathbf{d_i}$ which are not equal to $\mathbf{u}$. Then $|H|<\left(\frac12-\eps\right)m$.
		
		For constructing an appropriate perfect matching of $\ff_2^n$, we will group the given differences into several classes $D_1$, $D_2$, ..., $D_{\ell}$ in such a way that Proposition \ref{construction_scheme} can be used.
		
		First we will define the grouping, then give the order of the groups, and at the end of the proof we will explain why the condition of Proposition \ref{construction_scheme} is satisfied when the classes are ordered in this way.
		
		First of all, each difference equal to $\mathbf{u}$ will form its own class, and $\beta_{\mathbf{u}}(\{\mathbf{u}\})=1$.
		
		For differences not equal to $\mathbf{u}$, first we check if there is a pair of equal differences $\mathbf{d}\ne \mathbf{u}$. As long as there is such a pair, the pair will form its own class, and $\beta_{\mathbf{u}}(\{\mathbf{d},\mathbf{d}\})=\frac12$.
		
		Let $H'$ consist of the remaining elements of $H$. Then the elements of $H'$ are all distinct and are not congruent to $\mathbf{0}$ mod $\mathbf{u}$. Furthermore, $\sum H'=|H'|\mathbf{u}$.
		
		Now divide the elements of $H'$ into two classes $H'_1$ and $H'_2$ such that none of the two classes contains a pair of elements which are congruent mod $\mathbf{u}$. This can be done in the following way, for example: consider all $\langle \mathbf{u}\rangle$-cosets which have two elements in $H'$, and for each such coset, place one element in $H'_1$ and the other in $H'_2$. For those $\langle \mathbf{u}\rangle$-cosets which have only one element in $H'$, place those elements into $H'_1$.
		
		Let $k=\left\lfloor \frac13\log_2n\right\rfloor$. In $H'_1$, repeatedly find a $k$-flat and put its elements into a new class, removing them from $H'_1$. (Each $k$-flat forms its own distinct class of $2^k$ differences.)  Do this until no more $k$-flats can be found among the remaining elements of $H'_1$. Then do the same for $H'_2$. By Proposition \ref{blockingratio_kflat}, each class $D_i$ so obtained satisfies $\beta_{\mathbf{u}}(D_i)\ge \frac12-\frac{1}{2^{k+1}}-\frac{1}{2^{n-k(2^k+2)}}$.
		
		Now by Proposition \ref{kflat_findable}, the number of remaining elements in $H'_1$ will be less than $2\cdot~2^{n\left(1-\frac{1}{2^{k-1}}\right)}$, and the same is true for $H'_2$. Let $H''$ consist of all remaining elements from $H'_1$ and $H'_2$; then $|H''|< 4\cdot 2^{n\left(1-\frac{1}{2^{k-1}}\right)}$.
		
		Note that every $k$-flat has an even number of elements and $\mathbf{0}$ sum, so $|H''|\equiv |H'|\pmod{2}$ and $\sum H''=\sum H'$, meaning that $\sum H''=|H''|\mathbf{u}$ holds. $H''$ does not contain any element congruent to $\mathbf{0}$ mod $\mathbf{u}$.
		
		By Proposition \ref{ugood_partition}, $H''$ can be partitioned into $\mathbf{u}$-good multisets. (Here they are actually sets, since the elements of $H''$ are all distinct.) By Remark \ref{ugood_partition_sizeremark}, every such multiset has size $\le 2n$. Each $\mathbf{u}$-good multiset will form its own class. By Proposition \ref{beta_greedybound}, every such class $D$ will have $\beta_{\mathbf{u}}(D)\ge \frac{1}{\binom{2n}{2}+1}\ge \frac{1}{2n^2}$.
		
		Now order the classes in the following way: first put the $\mathbf{u}$-good multisets from $H''$, then the $k$-flats from $H'\setminus H''$, then the pairs $\{\mathbf{d},\mathbf{d}\}$ and finally the singleton classes $\{\mathbf{u}\}$. Then the condition of Proposition \ref{construction_scheme} will be satisfied:
		
		\begin{itemize}
			\item For a $\mathbf{u}$-good class $D_i$, we have $\sum_{j=1}^{i-1} |D_j|<|H''|<4\cdot 2^{n\left(1-\frac{1}{2^{k-1}}\right)}$. So
			
			$$\frac{\sum_{j=1}^{i-1} |D_j|}{2^{n-1}}<8\cdot 2^{-\frac{1}{2^{k-1}}n}<\frac{1}{2n^2}\le \beta_{\mathbf{u}}(D_i),$$
			
			since $$16n^2<2^{\frac{1}{2^{k-1}}n} ~\Leftrightarrow~ 2\log_2 n+4<\frac{1}{2^{k-1}}n~\Leftrightarrow~ 2^{k-1}<\frac{n}{2\log_2 n+4},$$
			
			which is true because $2^{k-1}\le 2^{\frac13\log_2 n-1}=\frac12n^{1/3}\le \frac{n}{2\log_2 n+4}$ for $n$ large enough.
			
			\item For a $k$-flat of $H'\setminus H''$, we have $\sum_{j=1}^{i-1} |D_j|<|H'|<(\frac12-\eps)\cdot 2^{n-1}$, so $\frac{\sum_{j=1}^{i-1} |D_j|}{2^{n-1}}<\frac12-\eps$, and using the fact that $k+1\ge \frac13\log_2 n$,
			
			$$\beta_{\mathbf{u}}(D_i)\ge \frac12-\frac{1}{2^{k+1}}-\frac{1}{2^{n-k(2^k+2)}}\ge \frac12-\frac{1}{n^{1/3}}-\frac{1}{2^{n-\frac13\log_2 n(n^{1/3}+2)}}>\frac12-\eps$$ for $n$ large enough.
			
			\item For a pair $\{\mathbf{d}, \mathbf{d}\}$, we have $\sum_{j=1}^{i-1} |D_j|<|H|<(\frac12-\eps)\cdot 2^{n-1}$, so $$\beta_{\mathbf{u}}(D_i)=\frac12>\frac12-\eps>\frac{\sum_{j=1}^{i-1} |D_j|}{2^{n-1}}.$$
			
			\item For singletons $\{\mathbf{u}\}$, we have $\sum_{j=1}^{i-1} |D_j|<2^{n-1}$, so $\beta_{\mathbf{u}}(D_i)=1>\frac{\sum_{j=1}^{i-1} |D_j|}{2^{n-1}}$.
		\end{itemize}
		
		So an appropriate perfect matching of $\ff_2^n$ exists according to the given differences.
	\end{proof}
	
	\section*{Acknowledgements}
        I am very grateful to Zoltán Lóránt Nagy for his helpful comments and advice regarding my research and this paper itself.

        I would like to thank Péter Csikvári for bringing the conjecture of Balister, Győri and Schelp to my attention, and for his helpful comments and advice regarding my research.

	    I would also like to thank Ago-Erik Riet for calling my attention to the coding-theoretical literature on batch codes related to the main conjecture, and the anonymous referee for their valuable remarks on the paper.

	\section*{Appendix}
	
	Here we finish the remaining cases of the following Proposition.
	\medskip
	
	\noindent \textbf{Proposition \ref{merging_circuits}.} \textit{Let $G$ be an abelian group, and let $A=(a_1,a_2,...,a_k)$ and $B=(b_1,b_2,...,b_{\ell})$ be zero-sum sequences of elements in $G$, where $2\le k\le \ell$ are integers. Suppose that $A.B$ is free of signed $<k$-sums and $B$ is free of signed $<\ell$-sums. Then $A.B$ is simply orderable, except in the following two special cases:}
	
	\begin{itemize}
		\item \textit{$k=\ell=2$, $A\sim B\sim (a,-a)$ for some $0\ne a\in G$,}
		
		\item \textit{$k=2$, $\ell=3$, $A\sim (a,a)$ and $B\sim (a,d,a-d)$ for some $a,d\in G$ with $a,d\ne 0$, $2a=0$, $d\ne a$ and $a+2d\ne 0$.}
	\end{itemize}
	
	\begin{proof}[Proof of Proposition \ref{merging_circuits}: cases of small $k$ and $\ell$]
			
		In the remaining cases we have $2\le k\le \ell\le 5$. For these cases, the following observation will help: for any distinct $U,U'\subseteq [k+\ell]$, if $\sum (A.B)_U=\sum (A.B)_{U'}=0$ then $|U\triangle U'|\ge k$. (Otherwise, the indices corresponding to $U\triangle U'$ would give a $<k$-element signed sum of zero.)
		
		Therefore the characteristic vectors of subsets $U\subseteq [k+\ell]$ corresponding to zero sums form a binary code of length $k+\ell$ and minimum distance at least $k$. By the Hamming bound, the size $N$ of such a code satisfies 
		
		$$N\le \frac{2^{k+\ell}}{\sum\limits_{i=0}^t \binom{k+\ell}{i}} \eqno (*)$$ 
		
		where $t=\left\lfloor \frac{k-1}{2}\right \rfloor$. Here $N$ is the number of subsets $U$ of $[k+\ell]$ with $\sum U=0$, so $N=N_0+N_1+...+N_{k+\ell}=2+N_k+N_{k+1}+...+N_{\ell}$.
		
		\textbf{Case $k=\ell=5$:} In this case, by $(*)$, $N=2+N_5\le \frac{2^{10}}{\binom{10}{0}+\binom{10}{1}+\binom{10}{2}}<19$. So $N_5\le 16$, and $\alpha=5\cdot \frac{N_5}{\binom{10}{5}}\le \frac{20}{63}<1$.
		
		\textbf{Case $k=4$, $\ell=5$:} Let us consider $N_4$. For every $S\subseteq [4]$, there are at most two subsets $T\subseteq [5]$ with $A_S.B_T=0$, and if there are two then they are complements of each other, and hence they have different sizes. So there is at most one $T$ such that $\sum A_S.B_T$ is zero-sum and $|S|+|T|=4$. For every $U,U'\subseteq [9]$ with $\sum (A.B)_U=\sum (A.B)_{U'}=0$, either $U=U'$ or $|U\triangle U'|\ge 4$. So supposing that $|U|=|U'|=4$ with $U\ne U'$, this means that $|U\cap U'|\ne 3$, and as complements of zero-sum sequences are also zero-sum, $|U\triangle \overline{U'}|\ge 4$ holds too, meaning that $|U \cap U'|\not\in \{0,1\}$ either, so $|U\cap U'|=2$. Take $U'=\{1,2,3,4\}$. Now either $U=\{1,2,3,4\}$ or $U$ has an intersection with $\{1,2,3,4\}$ of size $2$, which can be extended to a zero-sum set of size $4$ in at most one way, meaning that $N_4\le 7$ altogether. This means $\alpha=\frac{9}{2}\cdot \frac{2N_4}{\binom{9}{4}}\le \frac12<1$.
		
		\textbf{Case $k=\ell=4$:} For each $S\subseteq [4]$ there are at most two $T\subseteq [4]$ with $\sum A_S.B_T=0$, and similarly to the previous case, one can obtain that any $U,U'\subseteq [8]$ of size $4$ and zero sum cannot have an intersection of size 3. $\{1,2,3,4\}$ and $\{5,6,7,8\}$ are zero-sum, so by investigating the intersection sizes of $U$ with these two sets, this means that any other zero-sum $U$ has exactly two elements from $\{1,2,3,4\}$, and any two such elements can be extended to a zero-sum set of size $4$ in at most two ways, giving $N_4\le 2+\binom{4}{2}\cdot 2=14$. So $\alpha=4\cdot \frac{N_4}{\binom{8}{4}}\le \frac45<1$.
		
		\textbf{Case $k=3$, $\ell=5$:} For each $S\subseteq [3]$ there are at most two $T\subseteq [5]$ with $\sum A_S.B_T=0$ (and if there are two then they are complements of each other), and any zero-sum $U,U'\subseteq [8]$ are equal or have Hamming distance at least 3. So if $|U|=|U'|=3$ then $|U\cap U'|=2$ is impossible, and by taking the complement of $U'$, $|U\cap U'|=0$ is also impossible. If $|U|=3$ and $|U'|=4$ then $|U\cap U'|=3$ is impossible, and by taking the complement of $U'$, $|U\cap U'|=0$ is also impossible. Fixing $U=\{1,2,3\}$, this means that $3$-element zero-sum index sets have intersection $1$ or $3$ with $U$, and every such intersection can only occur once (meaning that $N_3\le 4$), and $4$-element zero-sum index sets have intersection $1$ or $2$ with $U$, and every such intersection can only occur once (meaning that $N_4\le \binom{3}{1}+\binom{3}{2}=6$). Therefore $\alpha=4\left(\frac{2N_3}{\binom{8}{3}}+\frac{N_4}{\binom{8}{4}}\right)\le 4\left(\frac17+\frac{3}{35}\right)=\frac{32}{35}<1$.
        
		\textbf{Case $k=3$, $\ell=4$:} Using the techniques seen above, one obtains that for any zero-sum $U\subseteq [7]$ of size $3$, either $U=\{1,2,3\}$ or $|U\cap \{1,2,3\}|=1$, and one-element subsets of $\{1,2,3\}$ can be extended to zero-sum sets in $[7]$ of size $3$ in at most two ways. This gives altogether $N_3\le 7$, however we need $N_3\le 4$ to show that $\alpha=\frac72\cdot \frac{2N_3}{\binom{7}{3}}=\frac{N_3}{5}<1$. So supposing $N_3\ge 5$, there is a 1-element subset of $\{1,2,3\}$ that can be extended in two ways. We can assume that this is $\{1\}$. As three-element zero-sum subsets cannot intersect in exactly 2 elements (or else they would violate the minimum distance of $\ge 3$), one can assume that $\{1,4,5\}$ and $\{1,6,7\}$ correspond to zero-sum subsets, meaning that $a_1+b_1+b_2=a_1+b_3+b_4=0$. Still avoiding intersections of size 2, without loss of generality, we can also assume that $\{2,4,6\}$ is a further zero-sum subset, so $a_2+b_1+b_3=0$ too. But then $(a_1, a_2, b_3, a_3, b_1, b_2, b_4)$ can be seen to be a good cyclic ordering, as each subinterval of length 3 has an intersection of size $2$ with a known zero-sum subset of size $3$ ($\{a_1,a_2,a_3\}$, $\{a_1,b_1,b_2\}$, $\{a_1,b_3,b_4\}$ or $\{a_2,b_1,b_3\}$), and hence does not itself have zero sum.
		
		\textbf{Case $k=\ell=3$:} Here $\{1,2,3\}$ and $\{4,5,6\}$ are zero-sum, and two zero-sum triples cannot have an intersection of size 2, meaning that there cannot be any further zero-sum triples in $[6]$, and $N_3\le 2$, meaning that $\alpha=3\cdot \frac{N_3}{\binom{6}{3}}\le \frac{3}{10}<1$.
		
		\textbf{Case $k=2$, $\ell=5$:} Let $A=(a,-a)$. We need $\alpha=\frac72\left(\frac{2N_2}{\binom{7}{2}}+\frac{2N_3}{\binom{7}{3}}\right)<1 \Leftrightarrow 5N_2+3N_3<15$. By Observation 1, $N_2+N_3+N_4+N_5=2(N_2+N_3)\le 6$, so $N_2+N_3\le 3$. Hence $5N_2+3N_3\le 5(N_2+N_3)\le 15$ with equality if and only if $N_2=3$ and $N_3=0$. But $B$ is free of signed $<5$-sums, so at most one of its elements can belong to the set $\{a, -a\}$, and there is no zero-sum pair within $B$. This implies that the number of zero-sum pairs is at most $2$, so $N_2=3$ cannot happen, unless $2a=0$. However in this case $A=(a,a)$ and $B=(a,d,e,f,g)$, and $(a,d,a,e,a,f,g)$ is a good cyclic ordering (as using $2a=0$, every two- or three-element subinterval along this cycle sums to a sum of $1$, $2$ or $3$ distinct elements of $B$ which must be nonzero).
		
		\textbf{Case $k=2$, $\ell=4$:} Let $A=(a,-a)$ and $B=(c,d,e,f)$. We need $\alpha=3\left(\frac{2N_2}{\binom{6}{2}}+\frac{N_3}{\binom{6}{3}}\right)<1 \Leftrightarrow 8N_2+3N_3<20$. By Observation 1, $N_2+N_3+N_4=2N_2+N_3\le 6$. If $N_2=0$ then $N_3\le 6$ so $3N_3<20$. Again, as $B$ has no signed $<4$-sums, at most one of its elements can belong to $\{a, -a\}$, and there is no zero-sum pair within $B$. Distinguish three cases depending on the value of $N_2$.
		
		If $N_2=1$ then $\{a,-a\}$ is the only zero sum of size 2. Both $\{a\}$ and $\{-a\}$ might be extended to a 3-element zero-sum multiset in at most two ways, giving $N_3\le 4$. If $8N_2+3N_3<20$ we are done, so we can assume $N_3=4$. Then $\{a\}$ and $\{-a\}$ extend to 3-element zero-sum multisets in two ways each. If $\{a,c,d\}$ has zero sum then for example $\{a,c,e\}$ cannot have zero sum, as then we would have $d=e$. A complement of a zero-sum set is also zero-sum, so the only case is that the 3-element zero-sums are $a+c+d=a+e+f=(-a)+c+d=(-a)+e+f=0$, meaning that $a=-a$ so $2a=0$. In this case, $(a,e,d,a,f,c)$ is a good cyclic ordering of the six elements: it doesn't have 2- or 3-element zero-sum subintervals, as we have enumerated all of those and they do not appear.
		
		If $N_2=2$ then $A=(a,-a)$, $B=(a,d,e,f)$ where $a\ne -a$ (so $2a\ne 0$). For $N_3\le 1$ we are then done, and for $N_3\ge 2$, $\{a\}$ in $A$ must extend to three-element zero-sum multisets in two ways, and $N_3=2$. So we can assume $a+a+f=a+d+e=0$, meaning that $B=(a, d, a-d, -2a)$. Since $B$ is free of signed $<4$-sums, we have $a,d,2a,3a\ne 0$, $a\ne 2d$ and $d\ne \pm a, \pm 2a, \pm 3a$. Then $(-a, a-d, a, d, a, -2a)$ is a good cyclic ordering, which is easy to check.
		
		If $N_2=3$ then we must have $a=-a$ (i.e., $2a=0$), and $A=(a,a)$ and $B=(a,d,e,f)$, and then $(a,d,a,e,a,f)$ is a good cyclic ordering, similarly to the case seen for $(k,\ell)=(2,5)$.
		
		\textbf{Case $k=2$, $\ell=3$:} Let $A=(a,-a)$ and $B=(c,d,e)$. If $N_2\le 1$ then $\alpha=\frac52\cdot \frac{2N_2}{\binom{5}{2}}\le \frac12<1$, so $N_2\ge 2$. Similarly to the previous cases, at most one element in $B$ can be $\pm a$, giving $N_2=2$ if $a\ne -a$ and $N_2=3$ if $a=-a$. In the former case, $A=(a,-a)$ and $B=(a, d, -a-d)$ where the ordering $(-a, -a-d, a, a, d)$ works. In the latter case, we have $A=(a,a)$ and $B=(a, d, a-d)$ where $a,d\ne 0$, $d\ne a$ and $a+2d\ne 0$, since $B$ is free of signed $<3$-sums. In this case there is no good cyclic ordering, as 3 out of the 5 elements are equal to $a$, and two of them have to appear consecutively.
		
		\textbf{Case $k=\ell=2$:} Let $A=(a,-a)$ and $B=(b,-b)$. If $b\ne \pm a$ then $(a,b,-a,-b)$ is a good cyclic ordering, otherwise it is impossible to order $A.B=(a,-a,a,-a)$ simply.
	\end{proof}
\end{document}